\newcommand{\eps}{\varepsilon}
\newcommand{\dist}{\operatorname{dist}}
\newcommand{\einschraenkung}{\,\rule[-5pt]{0.4pt}{12pt}\,{}}
\newcommand{\cM}{\mathcal M}
\newcommand{\grad}{\nabla}
\newcommand{\sfrac}[2]{\mbox{$\frac{#1}{#2}$}}
\newcommand{\Id}{\operatorname{Id}}
\newcommand{\diag}{\operatorname{diag}}
\newcommand{\spec}{\operatorname{spec}}
\newcommand{\N}{{\mathbb N}}
\newcommand{\R}{{\mathbb R}}
\newcommand{\proj}{\operatorname{proj}}
\newcommand{\argmin}{\operatorname{argmin}}
\theoremstyle{plain}
\newtheorem{theorem}{Theorem}[section]
\newtheorem{lemma}[theorem]{Lemma}
\newtheorem{definition}[theorem]{Definition}
\theoremstyle{definition}
\newtheorem{remark}[theorem]{Remark}
\title[Convergence Rates for Polyak's Heavy Ball under PL-inequality]%
{Polyak's Heavy Ball Method Achieves Accelerated Local Rate of Convergence under Polyak-\L ojasiewicz Inequality}
\author[]%[Kassing]
{Sebastian Kassing}
\address{Sebastian Kassing\\
	Department of Mathematics and Informatics\\
	University of Wuppertal\\
%	42119 Wuppertal \\
	Germany
	}
\email{kassing@uni-wuppertal.de}
\author[]%[Weissmann]
{Simon Weissmann}
\address{Simon Weissmann\\
	Mathematisches Institut \\
	Universit\"at Mannheim \\
%	68159 Mannheim \\
	Germany}
\email{simon.weissmann@uni-mannheim.de}
\keywords{Polyak's heavy ball, PL-inequality, rate of convergence, stable manifold}
\subjclass[2020]{Primary 90C26; Secondary 90C30, 37C10, 34D45}
\begin{document}

\begin{abstract} 
	In this work, we analyze the convergence of Polyak's heavy ball method in both continuous and discrete time for non-convex $C^4$-objective functions satisfying the Polyak-\L ojasiewicz inequality.
	Under this weak assumption, we recover the asymptotic convergence rates originally derived by Polyak in~\cite{polyak1964some} for strongly convex objectives.
	Our results demonstrate that the heavy ball method exhibits asymptotic local acceleration on this class of functions. In particular, in the discrete time setting, we prove local convergence of the iterates to a minimum once the method enters a sufficiently small neighborhood of the set of minima, for a broad range of hyperparameters, including aggressive choices for the momentum parameter and the step-size for which global convergence is known to fail.
	Instead of the usually employed Lyapunov-type arguments, our approach leverages a new differential geometric perspective of the Polyak-\L ojasiewicz inequality proposed in~\cite{rebjock2023fast}. 
\end{abstract}

\maketitle

\section{Introduction}
In this work, we study the unconstrained minimization problem
\begin{align*}
	\text{Find } x^\ast \in \argmin_{x \in \R^d} f(x),
\end{align*}
where $f: \R^d \to \R$ is a continuously differentiable objective function. 
In machine learning applications, first-order optimization methods, such as the gradient descent scheme, play a central role in solving this minimization problem. Among these methods, momentum-based approaches have gained significant attention due to their potential to accelerate convergence toward critical points. The so-called momentum term stores and uses past gradient information to update the iterates. 
Motivated by the empirical success of these algorithms, a long-standing conjecture is that adding momentum improves the speed of convergence; see, e.g.~\cite{sutskever2013importance}. Heuristically, stored gradient information helps smooth out oscillations of standard gradient descent schemes, leading to improved convergence on ill-conditioned optimization problems.
However, the theoretical understanding of the advantages of these methods remains incomplete.

In this article, we consider Polyak's heavy ball method and its continuous time counterpart introduced by Polyak in his seminal work~\cite{polyak1964some}. 
The iterative scheme $(x_n)_{n \in \N_0}$, where $\N_0 := \N\cup \{0\}$, is 
defined by
\begin{align} \label{eq:HBdiscreteintro}
	x_{n+1} = x_n - \gamma \nabla f(x_n) + \beta (x_n-x_{n-1}), \quad n \in \N\,,
\end{align}
with initialization $x_0,x_1 \in \R^d$, step-size $\gamma >0$ and momentum parameter $\beta>0$. 
The algorithm \eqref{eq:HBdiscreteintro} can be viewed as a discretization of the heavy ball ODE
\begin{align}\begin{split} \label{eq:HBODE}
		\dot x_t & = v_t ,\\
		\dot v_t & = - \alpha v_t - \grad f(x_t),
	\end{split}
\end{align}
for a friction parameter $\alpha >0$.

It is worth noting that other acceleration methods, such as Nesterov’s accelerated gradient method \cite{nesterov2018lectures}, can also be interpreted as discretizations of \eqref{eq:HBODE} with asymptotically vanishing friction parameter; see, for instance, \cite{Su2016Adifferentialequation,Wilson2021}.

The first convergence rate result for the the heavy ball method \eqref{eq:HBdiscreteintro} and \eqref{eq:HBODE} was given by Polyak in the original work~\cite{polyak1964some}. It concerns the optimization of $C^2$-functions that are $\mu$-strongly convex and have an $L$-Lipschitz continuous gradient.\footnote{Polyak's result is often cited as a global convergence rate for quadratic objective functions. In addition, he established a local convergence rate under $\mu$-strong convexity.} In the continuous time setting, for optimally tuned friction parameter $\alpha$, Theorem~9 in~\cite{polyak1964some} shows that 
\begin{align} \label{eq:Polyakrate}
	\limsup\limits_{t \to \infty} e^{(2 \sqrt \mu-\eps) t} (f(x_t)-f(x^\ast)) = 0,\quad \text{for all}\ \eps>0\,,
\end{align}
where $(x_t)_{t\ge0}$ solves \eqref{eq:HBODE} and $x^\ast$ denotes the global minimum of $f$.
Moreover, the same theorem states that in the discrete time setting, for optimally tuned hyperparameters $\gamma$ and $\beta$, and under the additional assumption that heavy ball is initialized sufficiently close to the global minimum $x^\ast$, one has
\begin{equation} \label{eq:Polyakratediscrete}
	\limsup_{n \to \infty} \Bigl(\frac{\sqrt{\kappa}-1}{\sqrt{\kappa}+1}+\eps\Bigr)^{{-2n}} (f(x_n)-f(x^\ast))=0,\quad\text{for all}\ \eps>0\,,
\end{equation}
where $(x_n)_{n\in\N_0}$ is generated by \eqref{eq:HBdiscreteintro} and $\kappa = \frac{L}{\mu}$ denotes the condition number of the optimization problem. For comparison, the gradient descent scheme $(z_n)_{n \in \N_0}$, defined by
\begin{align*}
	z_{n+1} = z_n - \gamma \grad f(z_n) \quad (n \in \N_0),
\end{align*}
achieves, for optimally chosen step-size $\gamma >0$,
\begin{align*}
	{
		\limsup_{n \to \infty} 
		\Bigl(\frac{\kappa-1}{\kappa+1} +\eps  \Bigr)^{-2n} (f(z_n)-f(x^\ast)) =0,\quad\text{for all}\ \eps>0\,,}
\end{align*}
see, for example, Theorem 2.1.15 in~\cite{nesterov2018lectures}. Since $\kappa\ge1$ and, thus, $\frac{\sqrt{\kappa}-1}{\sqrt{\kappa}+1}  \le \frac{\kappa-1}{\kappa+1}$, Polyak's heavy ball method accelerates the local rate of convergence for strongly convex objectives.

In many practical applications, classical convexity assumptions are violated~\cite{vidal2017mathematics}. For example, objective functions arising in supervised learning with neural networks often exhibit complicated, non-isolated sets of critical points; see for instance~\cite{cooper2018loss, fehrman2020convergence, dereich2022minimal, wojtowytsch2021stochastic}. This has motivated decades of research on the asymptotic behavior of inertial dynamics like the solution to the heavy ball ODE \eqref{eq:HBODE}, as well as on Polyak’s heavy ball method \eqref{eq:HBdiscreteintro}, under assumptions weaker than global convexity. We will summarize related results in Section~\ref{sec:literature}. Popular relaxations of $\mu$-strong convexity include local versions of the $\mu$-quadratic growth condition, metric subregularity and quasi-strong convexity. We review these notions and their relations in Section~\ref{sec:geometryofPL}.

\begin{table} [t!]
	\setlength{\tabcolsep}{5.5pt} 
	\begin{tabular}{|c|c|c|c|}
		\hline
		Reference &
		Assumptions on $f$  & Optimal value of $\alpha$ & \begin{tabular}{c}
			Exp. rate of \\
			$f(x(t))-f(x^{\ast})$  \\
		\end{tabular} \\
		\hline
		\cite{polyak1964some} Thm. 9 & $\mu$-strongly convex &  $2 \sqrt{\mu}$ & \mathversion{bold} $2 \sqrt{\mu}$ \\
		\hline
		\cite{siegel2019accelerated} Thm. 1 &  $\mu$-strongly convex & $2 \sqrt{\mu}$ & $\sqrt{\mu}$ \\
		\hline
		\cite{aujol2022convergence1} Thm. 3.2 &  $\eqref{eq:quasar}$ + cv. + u.m. & $3 \sqrt{\frac{\mu}{2}}$ & $\sqrt{2 \mu}$  \\
		\hline
		\cite{aujol2022convergence} Thm. 1 & \eqref{eq:growth} + cv. + u.m. & $(2-\frac{\sqrt{2}}{2}) \sqrt{\mu}$ & $(2-\sqrt{2}) \sqrt{\mu}$  \\
		\hline
		\cite{aujol2024heavy} Thm. 3 &  \eqref{eq:growth} + cv. + ($\star$) & $(2-\frac{\sqrt{2}}{2}) \sqrt{\mu}$ & $(2-\sqrt{2}) \sqrt{\mu}$  \\
		\hline
		\cite{apidopoulos2022convergence} Thm. 1 &  \eqref{eq:PL} + $\kappa \le \frac 98$ & $\frac{\sqrt{\mu}}{2\sqrt{2}}(5+\sqrt{9-8 \kappa})$ & $\sqrt{2 \mu}$ \\
		\hline
		\cite{apidopoulos2022convergence} Thm. 1 & \eqref{eq:PL} + $\kappa \ge \frac 98$ & $(2\sqrt \kappa - \sqrt{\kappa-1})\sqrt{\mu})$ & $2(\sqrt \kappa -\sqrt{\kappa-1})\sqrt \mu$ \\
		\hline
		\cite{apidopoulos2022convergence} Thm. 2 &\eqref{eq:PL} + cv.  & $(2\sqrt \kappa - \sqrt{\kappa-1})\sqrt{\mu})$ & $2(\sqrt \kappa -\sqrt{\kappa-1})\sqrt \mu$ \\
		\hline 
		\cite{guptanesterov} Thm. 6 & \eqref{eq:quasar} + $C^2$-mfld.  & $2 \sqrt \mu$ & $\sqrt \mu$ \\
		\hline
		Thm.~\ref{theo1} & \eqref{eq:PL} & $2\sqrt{\mu}$ & \mathversion{bold}$2\sqrt{\mu}$
		\\
		\hline
	\end{tabular}
	\caption{Convergence rates for the heavy ball ODE \eqref{eq:HBODE}, in the sense of~\eqref{eq:Polyakrate}, obtained in the literature. The results in \cite{polyak1964some} and \cite{apidopoulos2022convergence} as well as our own result assume that $f$ is $L$-smooth. In \cite{polyak1964some} the function $f$ is required to be of class $C^2$, our result requires $f \in C^4$, while all other results allow $f \in C^1$. The abbreviations are as follows: \emph{cv.} stands for convex, \emph{u.m.} means that the objective function $f$ has a unique minimum, $C^2$-mfld. means that the set of minima forms a $C^2$-manifold, and $(\star)$ refers to the assumption that the set of minima has a $C^2$ boundary or is polyhedral. We use $\kappa := \sfrac{L}{\mu}$.}
	\label{table1}
\end{table}

\subsection{Main results}
In prior work, weakening the assumptions on the objective function led to a deteriorated rate of convergence for the heavy ball method, both in continuous and in discrete time.
The aim of this work is to prove that it is possible to recover Polyak's original asymptotic convergence rates \cite{polyak1964some} under a much weaker assumption on the geometry of the objective function. We analyze both the continuous time formulation \eqref{eq:HBODE} and the discrete time analog \eqref{eq:HBdiscreteintro} for a possibly non-convex objective function $f:\R^d \to \R$ that locally satisfies the PL-inequality \eqref{eq:PL} for a constant $\mu >0$. Our convergence analysis covers both the optimality gap in the loss function $f(x)-f(x^\ast)$, for a local minimum $x^\ast$, and the distance to the set of local minima.

Our theoretical results will hold locally around so-called regular points of $f$.
	\begin{definition}\label{ass:PL}
		Let $f: \R^d \to \R$ be a continuously differentiable function. A critical point $x^\ast \in \R^d$ is called \emph{$(\mu,L)$-regular point of $f$} if there exists an open neighborhood $U \subset \R^d$ of $x^\ast$ such that $f\big|_U:U \to \R$
		\begin{enumerate}
			\item[(A1)] is four times continuously differentiable,
			\item[(A2)] satisfies the PL-inequality for a constant $\mu >0$, i.e., 
			\begin{align} \label{eq:PL}
				\|\nabla f(x)\|^2 \ge 2 \mu (f(x)-f(x^\ast)), \quad \text{for all}\ x \in U.\tag{PL}
			\end{align}
			\item[(A3)] is $L$-smooth for a constant $L>0$, i.e.,
			\[ \|\nabla f(x)-\nabla f(y)\|\le L\|x-y\|,\quad\text{for all}\ x,y\in U\,.\]
		\end{enumerate}	
	\end{definition}

First, we consider the heavy ball ODE \eqref{eq:HBODE}. 
Table~\ref{table1} gives an overview of the convergence rates obtained in the literature for the heavy ball ODE under varying assumptions on the objective function. We stress that, except for Polyak's result \cite{polyak1964some} and the result presented in this work, all convergence rates reported in Table~\ref{table1} are non-asymptotic, and the corresponding results include explicitly derived constants.
To the best of our knowledge, the optimal\footnote{By optimal we mean that there exists a $\mu$-strongly convex and $L$-smooth quadratic function such that heavy ball attains the exponential rate of convergence $2\sqrt \mu$.} convergence rate $2 \sqrt{\mu}$ was only recovered in the very restrictive situation of a convex $C^{1,1}$-objective function that satisfies the PL-inequality \eqref{eq:PL} for a constant $\mu >0$ which is identical to the Lipschitz constant $L$ of the gradient; see~\cite[Theorem 2]{apidopoulos2022convergence}. As shown in~\cite{aujol2022convergence1}, some degree of smoothness beyond $C^1$-regularity is necessary to obtain the rate $2\sqrt{\mu}$. In fact, Proposition~3.3 in~\cite{aujol2022convergence1} presents an example of a strongly convex function that is continuously differentiable but not $L$-smooth such that the heavy ball ODE converges at the strictly slower exponential rate $\sqrt{2 \mu}$.

In our first main result, we show that for an $L$-smooth objective function that is $C^4$, one can weaken the strong convexity assumption to validity of the PL-inequality \eqref{eq:PL} and still recover Polyak's convergence rates for the heavy ball ODE.
\begin{theorem} \label{theo1}
	Let $f:\R^d \to \R$ be continuously differentiable and $x^\ast \in \R^d$ be a $(\mu,L)$-regular point. Let $(x_t,v_t)_{t\ge0}$ be a solution of \eqref{eq:HBODE} with initial condition $x_0, v_0\in\R^d$ and friction parameter $\alpha >0$. If $(x_t,v_t) \to (x^\ast,0)$ as $t\to\infty$ then, for every $\eps>0$, one has
	\begin{align} \label{eq:convrate2}
		\lim\limits_{t \to \infty} e^{(m(\alpha) - \eps) t} 
		\|x_t-x^\ast\| = 0\, ,
	\end{align}
	and
	\begin{align} \label{eq:convrate}
		\lim\limits_{t \to \infty} e^{2(m(\alpha) - \eps) t} (f(x_t)-f(x^\ast)) = 0\, ,
	\end{align}
	where $m(\alpha)=\frac12(\alpha-\sqrt{\max(0,\alpha^2-4\mu)})$. In particular, $m(\alpha)$ is maximized by $\alpha^\ast=2\sqrt{\mu}$ for which $m(\alpha^\ast)=\sqrt{\mu}$.
\end{theorem}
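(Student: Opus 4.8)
The plan is to separate the argument into a \emph{global} phase, showing that $(x_t,v_t)$ eventually enters every neighbourhood of $\mathcal M\times\{0\}$ in $\R^d\times\R^d$, and a \emph{local} phase near $\mathcal M$, where the sharp exponential rate $m(\alpha)$ is extracted from the differential-geometric structure of \eqref{eq:PL} provided by \cite{rebjock2023fast}.

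\textbf{Global dissipation.} First I would use the energy $E_t := f(x_t)-f(x^\ast)+\tfrac12\|v_t\|^2$, which along \eqref{eq:HBODE} satisfies $\dot E_t = -\alpha\|v_t\|^2\le 0$. Hence $E_t\downarrow E_\infty\ge 0$ and $\int_0^\infty\|v_t\|^2\,\dd t<\infty$. Since $f$ is $L$-smooth and the sublevel sets of a PL-function have bounded distance to $\mathcal M$ (by the quadratic growth property implied by \eqref{eq:PL}, see Section~\ref{sec:geometryofPL}), $\|v_t\|$ and $\grad f(x_t)$ stay bounded, so $\tfrac{\dd}{\dd t}\|v_t\|^2$ is bounded and Barbalat's lemma yields $v_t\to 0$; a second Barbalat argument (now $\ddot v_t$ is bounded because $f\in C^2$ and the trajectory remains in a bounded tube around $\mathcal M$) gives $\dot v_t\to 0$, hence $\grad f(x_t)\to 0$, and by \eqref{eq:PL} also $f(x_t)\to f(x^\ast)$ and $d(x_t,\mathcal M)\to 0$. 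Thus it suffices to identify the limit $x_\infty\in\mathcal M$ and prove \eqref{eq:convrate2}, since \eqref{eq:convrate} then follows at once from $L$-smoothness via $f(x_t)-f(x_\infty)\le\tfrac{L}{2}\|x_t-x_\infty\|^2$.

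\textbf{Geometry near $\mathcal M$.} By the description of the PL-inequality in \cite{rebjock2023fast}, for $f\in C^4$ under Assumption~\ref{ass:PL} the set $\mathcal M$ is, locally around each $x^\ast\in\mathcal M$, an embedded $C^3$ submanifold with $T_{x^\ast}\mathcal M=\ker\Hess f(x^\ast)$, and $\Hess f(x^\ast)$ restricted to the normal space $N_{x^\ast}\mathcal M$ is positive definite with smallest eigenvalue $\ge\mu$. Lifting to phase space, $\mathcal M\times\{0\}$ is a $C^3$ manifold of equilibria of $F(x,v)=(v,\,-\alpha v-\grad f(x))$, and $DF(x^\ast,0)=\begin{pmatrix}0 & \Id\\ -\Hess f(x^\ast) & -\alpha\Id\end{pmatrix}$ has spectrum equal to $\{0\}$ (eigenspace $T_{x^\ast}\mathcal M\times\{0\}$, tangent to the equilibrium manifold) together with the roots of $\zeta^2+\alpha\zeta+\lambda=0$ over the eigenvalues $\lambda\ge\mu$ of $\Hess f(x^\ast)|_{N_{x^\ast}\mathcal M}$. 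Every such root has real part at most $-m(\alpha)$, because $\lambda\mapsto\tfrac12\bigl(\alpha-\sqrt{\max(0,\alpha^2-4\lambda)}\bigr)$ is non-decreasing and hence minimised over $\lambda\ge\mu$ at $\lambda=\mu$. Therefore $\mathcal M\times\{0\}$ is a normally attracting invariant manifold with normal contraction rate $m(\alpha)$.

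\textbf{Local rate via the stable manifold, and conclusion.} Fix $x^\ast$ in the (nonempty, connected, $F$-invariant) $\omega$-limit set of the trajectory, which the global phase places in $\mathcal M\times\{0\}$. In a small tubular neighbourhood split $x_t=p_t+n_t$ with $p_t$ the nearest point on $\mathcal M$ and $n_t\in N_{p_t}\mathcal M$ small; then $\ddot n_t+\alpha\dot n_t+\Hess f(p_t)|_N\,n_t=R_t$ with $\|R_t\|=o(\|n_t\|+\|\dot n_t\|)$ uniformly (here $C^4$ regularity is used, for a $C^{1,1}$ tubular-neighbourhood decomposition and Lipschitz control of $R_t$), while $\ddot p_t+\alpha\dot p_t=S_t$ with $\|S_t\|$ controlled by $\|n_t\|+\|\dot n_t\|$. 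Comparing the $n_t$-equation with the linear damped oscillator of stiffness $\ge\mu$ and damping $\alpha$ — whose slowest decay rate is precisely $m(\alpha)$ — a Gronwall estimate on the stable subspace (equivalently, the center--stable manifold theorem applied to the equilibrium set $\mathcal M\times\{0\}$) gives $\|n_t\|+\|\dot n_t\|=O\bigl(e^{-(m(\alpha)-\eps)t}\bigr)$ for every $\eps>0$; inserting this into the $p_t$-equation makes $\dot p_t$ integrable, so $p_t\to x_\infty\in\mathcal M$ with $\|p_t-x_\infty\|=O\bigl(e^{-(m(\alpha)-\eps)t}\bigr)$. Combining the two parts yields \eqref{eq:convrate2}, hence \eqref{eq:convrate}; finally $g(\alpha):=\tfrac12\bigl(\alpha-\sqrt{\max(0,\alpha^2-4\mu)}\bigr)$ equals $\alpha/2$ for $\alpha\le 2\sqrt\mu$ and is strictly decreasing for $\alpha>2\sqrt\mu$, so it is maximised at $\alpha^\ast=2\sqrt\mu$ with $m(\alpha^\ast)=\sqrt\mu$.

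\textbf{Main obstacle.} The delicate point is the local rate: one must ensure that the tangential drift along $\mathcal M$ and the superlinear remainder $R_t$ do not degrade the normal contraction, so that the slow normal mode at $\lambda=\mu$ — which is all that limits the upper bound — really decays at rate $m(\alpha)-\eps$. This is exactly where the Morse--Bott-type nondegeneracy from \cite{rebjock2023fast} and the $C^4$ hypothesis enter, and it is the non-isolated-minimum analogue of a classical stable-manifold estimate around a hyperbolic equilibrium.
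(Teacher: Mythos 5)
Your skeleton (global dissipation, then a local analysis in a tubular neighbourhood of $\mathcal M$ whose linearization has spectral abscissa $-m(\alpha)$) is the same as the paper's, and your spectrum computation and the deduction of \eqref{eq:convrate} from \eqref{eq:convrate2} via $L$-smoothness are fine. But the global phase has a genuine gap: the energy $f(x_t)+\tfrac12\|v_t\|^2$ only yields $\int_0^\infty\|v_t\|^2\,\dd t<\infty$, $v_t\to0$ and $d(x_t,\mathcal M)\to0$; it does not give boundedness of the trajectory, let alone convergence to a single point. Since $\mathcal M$ may be unbounded (and non-compact, with normal injectivity radius and curvature degenerating at infinity), the iterate could a priori drift along $\mathcal M$ with non-integrable speed, so your step ``fix $x^\ast$ in the (nonempty) $\omega$-limit set'' and the subsequent \emph{uniform} tubular-neighbourhood estimates are unjustified as stated. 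The paper resolves exactly this point first, in Proposition~\ref{prop:limit}, by using Polyak's Lyapunov function $E(x,v)=f(x)+\tfrac{\alpha}{\alpha^2+2L}\langle\nabla f(x),v\rangle+\tfrac{L}{\alpha^2+2L}\|v\|^2$, whose exponential decay under \eqref{eq:PL} gives $\int_0^\infty\|v_t\|\,\dd t<\infty$ and hence $x_t\to x_\infty\in\mathcal M$; only then is the chart built around the already-identified limit $x_\infty$, which is what makes all local constants uniform. You need this (or an equivalent argument producing integrability of $\|v_t\|$) before your local phase can start.

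There is a second, more technical flaw in the local phase: the claimed bound $\|R_t\|=o(\|n_t\|+\|\dot n_t\|)$ cannot hold as stated, because differentiating the nearest-point decomposition $x_t=p_t+n_t$ produces second-fundamental-form (chart-curvature) terms quadratic in the tangential velocity $\dot p_t$, which are of size $O(\|\dot p_t\|^2)$ and are not dominated by $\|n_t\|+\|\dot n_t\|$. So a Gronwall/comparison argument run on the normal variables alone does not close. The paper avoids this by keeping the \emph{full} transformed velocity $\tilde v_t=D\Phi(x_t)v_t$ (tangential and normal components) in the contracting linear system \eqref{eq:systemlin}--\eqref{eq:ODEproof_matrix}: the matrix $A$ there contains a $-\alpha\Id_{\R^{d_T}}$ block, the error terms $\zeta_1,\zeta_2$ and the linearization error are bounded by $\delta$ times the norm of the joint state $(\proj_N(\tilde x_t),\tilde v_t)$ (Lemma~\ref{lem:Udelta}), and Gronwall then yields decay at rate $\eta+C_\eta\delta$ with $\eta$ arbitrarily close to $-m(\alpha)$; convergence of $p_t$ (resp.\ $x_t$) then follows by integrating the velocity, as in your sketch. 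Your argument can be repaired by the same device (treat the tangential velocity as part of the contracting state, noting $\alpha\ge m(\alpha)$), but as written the remainder estimate and the two-block bootstrap are not valid.
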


In our theorem, we restrict attention to the case in which the heavy ball ODE converges to a critical point which satisfies the conditions in Definition~\ref{ass:PL}. General convergence results for the ODE \eqref{eq:HBODE} are already available in the literature, including convergence for analytic objective functions \cite{haraux1998convergence}, for functions that satisfy a \L ojasiewicz inequality \cite{chill2009applications}, and for functions that satisfy a Kurdyka–\L ojasiewicz inequality around every point \cite{begout2015damped}.

We now turn to Polyak's heavy ball method in discrete time. Since we work with a constant and, thus, non-vanishing step-size $\gamma >0$ there exists no satisfying a-priori result for the convergence of $(x_n)_{n \in \N_0}$. \cite{danilova2020non} proves convergence for the heavy ball method under the PL-inequality but only for a small range of hyperparameters $\gamma, \beta$. Optimizing over these hyperparameters, the resulting rate matches the gradient descent convergence rate derived in \cite{karimi2016linear} under the same assumptions on the objective function; see also~\cite{ghadimi2015global}. In particular, no accelerated rate is obtained. For a more aggressive choice of hyperparameters, strongly convex objective functions have been constructed for which there exist initializations such that the heavy ball method does not converge; see \cite{lessard2016analysis, goujaud2023provable}. 

In more detail, in~\cite{goujaud2023provable} it is shown that for every choice of $\gamma$ and $\beta$ there exists an $L$-smooth and $\mu$-strongly convex function and initial points $x_0,x_1$ such that 
\begin{align*} 
	\limsup_{n \to \infty} \Bigl(\frac{17 \kappa-1}{17 \kappa+1} \Bigr)^{-2n} (f(x_n)-f(x^\ast)) > 0\,,
\end{align*}
implying that either the sequence $(f(x_n)-f(x^\ast))_{n\in\N_0}$ converges  significantly slower than the local rate \eqref{eq:Polyakratediscrete} derived by Polyak or the iterates $(x_n)_{n \in \N_0}$ oscillate. 
More generally, a construction of a bad instance in \cite{yue2023lower} shows that no first-order algorithm can achieve a \textit{global} accelerated rate of convergence over the class of functions satisfying the PL-inequality. 

In our second main result, we derive a \textit{local} convergence result for the heavy ball method \eqref{eq:HBdiscreteintro} under the additional assumption that the iterates find a neighborhood of a $(\mu,L)$-regular point $x^\ast$ and, in particular, that $f$ is $C^4$ on this neighborhood.
We recover the local convergence rates derived by Polyak \cite{polyak1964some} for $\mu$-strongly convex functions, proving that Polyak's heavy ball achieves asymptotic acceleration under the PL-inequality, provided the iterates find a certain neighborhood of the local minima where $f$ is $C^4$. Due to the counterexamples in~\cite{lessard2016analysis, goujaud2023provable} this additional assumption is necessary to derive an accelerated rate of convergence.

\begin{theorem} \label{theo2}
	Let $f: \R^d \to \R$ be a continuously differentiable function and $x^\ast \in \R^d$ be a $(\mu,L)$-regular point. Let $(x_n)_{n\in\N_0}$ be generated by \eqref{eq:HBdiscreteintro} for $\beta\in(0,1)$ and $\gamma\in(0,\frac{2(1+\beta)}{L})$. 
	Then there exists an open neighborhood $V$ of $x^\ast$ such that the following statement is true: if there exists an $N \in \N_0$ with $x_N,x_{N+1} \in V$ then $(x_n)_{n \in \N_0}$ converges to a local minimum $x_\infty$ with $f(x_\infty) = f(x^\ast)$ and for every $\eps >0$ one has
	\begin{align*}
		\lim_{n \to \infty} (m(\gamma, \beta)+\eps)^{-n}  \|x_n-x_\infty\| =0\, ,
	\end{align*}
	and
	\begin{align*}
		\lim_{n \to \infty} (m(\gamma, \beta)+\eps)^{-2n} (f(x_n)-f(x_\infty))=0\, ,
	\end{align*}
	where
	\begin{align*}
		m&(\gamma, \beta) = \\
		&\begin{cases}
			\sqrt \beta , & \text{ if } \gamma \in \mathcal I_{\beta,\mu, L}, \\
			\frac{1+\beta-\gamma \mu}{2}+\sqrt{\left(\frac{1+\beta-\gamma \mu}{2}\right)^{2}-\beta}, & \text{ if }  \gamma \in \Bigl(0, \frac{2(1+\beta)}{L+\mu} \Bigr] \setminus \mathcal I_{\beta,\mu, L} , \\
			\frac{\gamma L-(1+\beta)}{2}+\sqrt{\left(\frac{\gamma L-(1+\beta)}{2}\right)^{2}-\beta}, & \text{ if } \gamma \in \Bigl( \frac{2(1+\beta)}{L+\mu} , \frac{2(1+\beta)}{L} \Bigr) \setminus \mathcal I_{\beta,\mu, L}.\\
		\end{cases}
	\end{align*} 
	with $\mathcal I_{\beta,\mu, L}:= \Big[\frac{(1-\sqrt{\beta})^2}{\mu}, \frac{(1+\sqrt{\beta})^2}{L}\Big]$.
	This term is minimized by choosing \begin{align*}
		\gamma=\frac{4}{(\sqrt{\mu}+\sqrt{L})^{2}} \quad \text { and } \quad \beta= \Bigl( \frac{\sqrt \kappa-1}{\sqrt{\kappa}+1}\Bigr)^2
	\end{align*}
	for which $m(\gamma, \beta) = \frac{\sqrt{\kappa}-1}{\sqrt{\kappa}+1}$.
\end{theorem}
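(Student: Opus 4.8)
The plan is to recast the iteration \eqref{eq:HBdiscreteintro} as a discrete dynamical system on $\R^{2d}$, to identify the set of minimizers with a normally attracting invariant manifold of fixed points, and to read off the local rate from the spectrum of the linearization using the differential geometric picture of the PL-inequality. Concretely, I would set $z_n=(x_n,x_{n-1})$, so that $z_{n+1}=T(z_n)$ for the $C^3$-map $T(x,y)=\bigl(x-\gamma\nabla f(x)+\beta(x-y),\,x\bigr)$. Since \eqref{eq:PL} forces $\mathcal M=\{x:\nabla f(x)=0\}$, the fixed point set of $T$ is $\widetilde{\mathcal M}:=\{(p,p):p\in\mathcal M\}$. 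I would then invoke the differential geometric description of the PL-inequality from \cite{rebjock2023fast} (recalled in Section~\ref{sec:geometryofPL}): under Assumption~\ref{ass:PL}, $\mathcal M$ is a $C^3$-submanifold of $\R^d$ along which $\nabla^2 f$ is positive semidefinite with kernel $T_p\mathcal M$ and all nonzero eigenvalues contained in $[\mu,L]$; hence $\widetilde{\mathcal M}$ is a $C^3$-submanifold of $\R^{2d}$ consisting entirely of fixed points of $T$.

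The second step is a linearization and a $2\times2$ spectral computation. At $(p,p)$, in an orthonormal eigenbasis of $\nabla^2 f(p)$, $DT(p,p)$ is block diagonal, with a block $\left(\begin{smallmatrix}1+\beta&-\beta\\ 1&0\end{smallmatrix}\right)$ for each tangential direction (curvature $0$) and a block $A(\lambda)=\left(\begin{smallmatrix}1+\beta-\gamma\lambda&-\beta\\ 1&0\end{smallmatrix}\right)$ for each normal direction (curvature $\lambda\in[\mu,L]$). The tangential blocks have eigenvalues $1$ and $\beta$, and the eigenvectors for the eigenvalue $1$ span exactly $T_{(p,p)}\widetilde{\mathcal M}$. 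Since $\det A(\lambda)=\beta\in(0,1)$ and $|1+\beta-\gamma\lambda|<1+\beta$ precisely when $\gamma\in(0,\tfrac{2(1+\beta)}{L})$ (uniformly in $\lambda\in[\mu,L]$), the Jury/Schur criterion makes every $A(\lambda)$ a contraction in spectral radius; computing $\rho(A(\lambda))$ — equal to $\sqrt\beta$ when $|1+\beta-\gamma\lambda|\le 2\sqrt\beta$ and to $\tfrac12|1+\beta-\gamma\lambda|+\sqrt{\tfrac14(1+\beta-\gamma\lambda)^2-\beta}$ otherwise — shows, as in \cite{polyak1964some}, that $\max_{\lambda\in[\mu,L]}\rho(A(\lambda))=m(\gamma,\beta)$, attained at $\lambda\in\{\mu,L\}$; this yields the three cases in the statement, and the minimization of the explicit function $m(\gamma,\beta)$ over the admissible $(\gamma,\beta)$ is the elementary optimization carried out in \cite{polyak1964some}. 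In summary, the spectrum of $DT(p,p)$ is the eigenvalue $1$ with eigenspace $T_{(p,p)}\widetilde{\mathcal M}$, together with eigenvalues of modulus at most $m(\gamma,\beta)<1$, so $\widetilde{\mathcal M}$ is a normally attracting invariant manifold with identity internal dynamics and normal contraction rate $m(\gamma,\beta)$.

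The third step promotes this linear picture to genuine convergence. Because $\widetilde{\mathcal M}$ is a $C^3$ manifold of fixed points, normally hyperbolic with all normal eigenvalues of modulus $\le m(\gamma,\beta)<1$ and with identity internal dynamics, I would apply the reduction principle for normally hyperbolic invariant manifolds (equivalently, the center manifold theorem for $\widetilde{\mathcal M}$ together with its strong stable foliation — the discrete-time analogue of the argument run in \cite{rebjock2023fast} for the gradient flow): there is an open set $W\supset\widetilde{\mathcal M}$ in $\R^{2d}$ such that every orbit started in $W$ stays in $W$, converges to a point $z_\infty=(x_\infty,x_\infty)\in\widetilde{\mathcal M}$, and obeys $d(z_n,z_\infty)=o\bigl((m(\gamma,\beta)+\eps)^n\bigr)$ for every $\eps>0$. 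Choosing $U\supset\mathcal M$ small enough that $U\times U\subset W$, the hypothesis $x_N,x_{N+1}\in U$ gives $z_{N+1}=(x_{N+1},x_N)\in W$, whence $x_n\to x_\infty\in\mathcal M$ with $d(x_n,x_\infty)\le d(z_n,z_\infty)=o\bigl((m(\gamma,\beta)+\eps)^n\bigr)$. For the loss, $x_\infty\in\mathcal M$ gives $f(x_\infty)=f(x^\ast)$ and $\nabla f$ vanishes on $\mathcal M$, so $L$-smoothness (A2) yields $f(x_n)-f(x_\infty)\le\tfrac L2\,d(x_n,\mathcal M)^2\le\tfrac L2\,d(x_n,x_\infty)^2$, and squaring the distance rate gives $(m(\gamma,\beta)+\eps)^{-2n}(f(x_n)-f(x_\infty))\to0$.

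I expect the main obstacle to be this third step: the linearization at the minimizers is \emph{not} a contraction — the eigenvalue $1$ is forced onto the unit circle by the possible non-uniqueness of the minimizer — so the convergence and the sharp rate cannot be obtained from a naive contraction or Lyapunov argument and genuinely require the center/normally-hyperbolic-manifold machinery. This is also where $f\in C^4$ is used: it guarantees that $\widetilde{\mathcal M}$ is $C^3$, hence regular enough for the reduction principle and its strong stable foliation to apply with the sharp normal rate $m(\gamma,\beta)$. The remaining work is bookkeeping: $\mathcal M$ may be disconnected with components of different dimensions, but $m(\gamma,\beta)$ is independent of $\lambda$ and of the component, so the construction is local around each point of $\mathcal M$ and patches to a single neighborhood $U$ with a uniform rate.
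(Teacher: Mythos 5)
Your linear-algebraic core coincides with the paper's: the same $2\times2$ blocks $\bigl(\begin{smallmatrix}1+\beta-\gamma\lambda&-\beta\\1&0\end{smallmatrix}\bigr)$, the same case analysis producing $m(\gamma,\beta)$, and the same diagnosis that the eigenvalue $1$ along $T\widetilde{\mathcal M}$ is what blocks a naive contraction argument. The genuine gap is in your third step, where the hard part is outsourced to a ``reduction principle for normally hyperbolic invariant manifolds.'' The standard discrete-time results of this type (Hirsch--Pugh--Shub/Fenichel, or center-manifold reductions with strong stable foliations and asymptotic phase) are stated for \emph{compact} invariant manifolds, or require uniform normal hyperbolicity together with uniformly bounded geometry and uniform moduli of continuity of the derivatives; here $\mathcal M$, hence $\widetilde{\mathcal M}$, is in general non-compact, and none of these uniformity properties follow from Assumption~\ref{ass:PL}. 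Working locally around a point $p\in\mathcal M$ does not rescue the citation either: a small piece of $\widetilde{\mathcal M}$ is a manifold with boundary, and an orbit can drift tangentially out of the chart before the normal contraction captures it, so the assertion ``there is an open $W\supset\widetilde{\mathcal M}$ such that every orbit started in $W$ stays in $W$ and converges to a point of $\widetilde{\mathcal M}$ at the normal rate'' is precisely what must be proved, not quoted.

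In the paper this is exactly the content of Lemma~\ref{lem:conv2} (an induction showing the iterates remain in $B_{\frac34 r_2}(x^\ast)$, combining a crude bound on the first $N$ steps with the geometric decay of the normal components and momentum increments), built on the quantitative Gronwall-type estimate of Lemma~\ref{lem:aux} in the Fermi-type chart of Lemma~\ref{lem:chart}; the device of tracking the tangential \emph{increments} $x''_{n+1}-x''_n$ (contracted by $\beta$) instead of the tangential positions removes the eigenvalue $1$ altogether, so that no center-manifold machinery is needed and the rate $\rho(A)+\eps$ falls out of Gelfand's formula plus discrete Gronwall. To salvage your route you would have to either prove a local asymptotic-phase/stable-foliation statement for a possibly non-compact manifold of fixed points, including positive invariance of $W$ and the rate $m(\gamma,\beta)+\eps$ --- which essentially amounts to redoing the paper's estimates --- or locate and verify a citation that explicitly covers this non-compact setting; as written, the step is a gap. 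The surrounding bookkeeping in your proposal (the spectral computation, the kernel/tangent-space identification, $d(x_n,x_\infty)\le d(z_n,z_\infty)$, and $f(x_n)-f(x_\infty)\le\tfrac L2\,d(x_n,x_\infty)^2$ from $\nabla f(x_\infty)=0$) is correct.
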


We note that for $\beta \in \Bigl(0, \Bigl( \frac{\sqrt \kappa-1}{\sqrt{\kappa}+1}\Bigr)^2\Bigr)$ the interval $\mathcal I_{\beta,\mu, L}$ denotes the empty set. Moreover, for $\beta = \Bigl( \frac{\sqrt \kappa-1}{\sqrt{\kappa}+1}\Bigr)^2$ one has $\frac{(1-\sqrt{\beta})^2}{\mu}= \frac{(1+\sqrt{\beta})^2}{L}= \frac{2(1+\beta)}{L+\mu} $.

The derivation of convergence rates typically relies on crafting a suitable Lyapunov function depending on the class of objective functions under consideration. The proof strategy then consists of deriving a convergence rate for the Lyapunov function and, in a second step, proving that this implies a convergence rate for the objective function value. In contrast to this, our proof exploits a new differential geometric perspective on the PL-inequality; see \cite{feehan2020morse, rebjock2023fast}. More specifically, if $f$ is $C^2$ and satisfies the PL-inequality \eqref{eq:PL}, then the set of minima forms a manifold, and the function grows quadratically with the distance to this manifold. In that sense, we consider the dynamics of the heavy ball method under a chart that separates the space into tangential and normal directions. In the normal directions the manifold is strongly attracting, whereas in the tangential directions there is no restoring force. First, we prove fast convergence in the normal directions. This implies that the gradient of the objective function vanishes exponentially fast, which, due to the friction in the dynamical system, then leads to exponential convergence in the tangential directions as well. Therefore, the distance traveled by heavy ball can be bounded, which in turn leads to convergence of the dynamical system. A similar geometric approach has been used in \cite{guptanesterov} to show fast convergence rates of Nesterov's accelerated gradient descent in a non-convex setting.

The remainder of this article is organized as follows. Section~\ref{sec:literature} reviews related results from the literature. Section~\ref{sec:geometryofPL} discusses the geometric implications of the PL-inequality and its relationship to other relaxations of strong convexity. The proofs of Theorems~\ref{theo1} and \ref{theo2} are presented in Sections~\ref{sec:continuous} and Section~\ref{sec:discrete}, respectively. In Section~\ref{sec:experiments} we illustrate our findings through a numerical toy example.

Throughout this article, we slightly abuse notation and identify the spaces $\R^{d_1}\times \R^{d_2}$ and $\R^{d_1+d_2}$ for $d_1,d_2 \in \N$. In more detail, for vectors $v_1 \in \R^{d_1}$ and $v_2 \in \R^{d_2}$, we identify the object $(v_1, v_2) \in \R^{d_1} \times \R^{d_2}$ with $v \in \R^{d_1+d_2}$ such that $v^{(i)}=v_1^{(i)}$ for all $i=1, \dots, d_1$ and $v^{(d_1+i)}=v_2^{(i)}$ for all $i=1, \dots, d_2$.

\section{Literature review}\label{sec:literature}
Early work on second-order gradient systems was carried out by Haraux and Jendoubi in \cite{haraux1998convergence}. They established convergence to equilibrium for dynamical systems of the form
	\[ \ddot x_t + g(x_t) = \nabla f(x_t) \]
	where $f$ is analytic and $g$ is Lipschitz and coercive. Their framework includes the heavy ball ODE as a special case and represents one of the first general convergence results for inertial optimization dynamics beyond the convex setting. In a related direction, \cite{doi:10.1142/S0219199700000025} studied the heavy ball ODE in Hilbert spaces for smooth functions that are either convex or Morse. Therein, global existence of trajectories was established, weak convergence was obtained under convexity, and asymptotic convergence to local minima was shown in the Morse case, while examples of divergent solutions were also constructed. Subsequent works investigated second-order gradient systems with asymptotically vanishing viscous damping, motivated by fast convergence in convex optimization. In particular, \cite{GEG2009,Cabot2009,10.1007/s10107-016-0992-8} analyzed long-time behavior and convergence properties of inertial dynamics with time-dependent damping coefficients. It was observed that, for a specific choice of vanishing damping, these second-order systems can be interpreted as continuous-time limits of Nesterov’s accelerated gradient method \cite{Su2016Adifferentialequation}. A refined class of such models, incorporating Hessian-driven damping, was studied in \cite{ATTOUCH20165734}, where well-posedness and fast convergence were established in the convex regime. In \cite{maulen2025sde}, the stability of these dynamical systems under stochastic perturbation has been analyzed.

A broad line of research investigates the heavy ball ODE and heavy ball methods under non-convex regularity assumptions; see e.g.~\cite{siegel2019accelerated, aujol2022convergence1, aujol2022convergence, apidopoulos2022convergence, aujol2024heavy}. Convergence of discrete heavy ball schemes or stochastic variants has been established under various structural assumptions, including weak quasi-convexity or metric subregularity conditions~\cite{danilova2020non, wang2022provable, gess2023convergence, dereich2021convergence, weissmann2024almost}. Further, \cite{goudou2009gradient} studied gradient flow, heavy ball dynamics and proximal inertial variants under quasi-convexity in Hilbert spaces, establishing weak (and in symmetric cases strong) convergence. The work extends the convex theory presented in \cite{doi:10.1137/S0363012998335802}. More recent developments have also examined inertial Newton-like systems in Morse settings~\cite{castera_inertial_2023}, which include the heavy ball ODE as a limiting case and show almost-sure avoidance of strict saddle points. A central theme in all these works are geometric assumptions that lie strictly between strong convexity and general non-convexity. Metric subregularity emerged originally from variational analysis~\cite{cornejo1997conditioning}, and have since been refined in the non-smooth and semi-algebraic setting through the works of \cite{drusvyatskiy2015quadratic, Drusvyatskiy2014}.

To the best of our knowledge, \cite{wang2022provable} is the only work that shows an accelerated exponential rate of convergence for the heavy ball method beyond strongly convex objective functions. However, the function class covered by Theorem 1 in \cite{wang2022provable} is still quite restrictive. In particular, in addition to assuming the PL-inequality, they assume that, at each iteration, the Hessian is a diagonal matrix. Moreover, they assume that there exists a global minimum $x^\ast$ such that, at each iteration $n \in \N$, when averaging the Hessian of $f$ on the straight line from $x^\ast$ to $x_n$ one gets a strictly positive definite matrix. Note that assuming this property on a neighborhood of $x^\ast$ would imply that $x^\ast$ is an isolated strongly attracting minimum. Our Theorem~\ref{theo2} shows that the assumptions in Definition~\ref{ass:PL} are sufficient to get an accelerated local convergence rate for Polyak's heavy ball method. Let us also mention \cite{wang2021modular}, where a particular matrix decomposition is used to avoid the application of Gelfand's formula and to achieve a non-asymptotic accelerated convergence rate for strongly convex quadratics.

One of the most prominent assumptions on the objective function was introduced by Polyak \cite{polyak1963gradient} and \L ojasiewicz \cite{lojasiewicz1963propriete} and is known as the Polyak-\L ojasiewicz (PL) inequality \eqref{eq:PL}. 
	Due to its simplicity, generality and applicability for objective functions appearing in machine learning, see e.g.~\cite{karimi2016linear, kuruzov2023gradient, garrigos2023square, wojtowytsch2021stochastic, dereich2021convergence}, the PL-inequality is becoming a popular assumption for proving fast convergence for optimization algorithms on non-convex objective functions. It provides a gradient-dominance property ensuring global convergence of gradient-based methods without convexity (see Definition~\ref{ass:PL}). Related Łojasiewicz-type inequalities have played a central role in non-convex optimization more broadly: the Kurdyka–\L ojasiewicz (KL) framework \cite{attouch_convergence_2013, attouch2013convergence} provides a unified analysis for a wide class of descent algorithms including proximal, forward–backward and inexact gradient methods. Important connections between the quadratic growth condition \eqref{eq:growth} and KL inequalities were developed in the works \cite{bolte2017errorbounds,aze_nonlinear_2014}.

This KL perspective underlies many results for non-smooth or composite optimization involving inertial or proximal terms. For instance, the iPiano algorithm~\cite{doi:10.1137/130942954} is a non-smooth forward-backward generalization of the heavy ball method. In \cite{doi:10.1137/130942954}, the authors prove global convergence to critical points under the KL property together with sublinear convergence of the proximal gradient residual for this algorithm. A similar KL-based convergence analysis has been developed for non-convex forward–backward splitting in~\cite{10.5555/3157382.3157549}. The same authors used differential geometric and linearization arguments (similar to the ones used in the present work), to show local linear convergence of forward-backward splitting methods; see \cite{liang2017activity}. Moreover, a local convergence analysis for the heavy ball and iPiano methods under local KL inequalities have been developed in~\cite{ochs2018local}, showing local linear convergence to local minima. In the convex composite setting, global acceleration is classically achieved by Nesterov-type methods, most prominently the Fast Iterative Shrinkage-Thresholding Algorithm (FISTA) \cite{beck2009}.

Finally, the role of KL-type and PL-type inequalities in infinite-dimensional settings relevant for inverse problems has also been explored. In \cite{garrigos2023convergence}, the authors derived a Łojasiewicz inequality for certain convex, non-smooth quadratic functionals on Hilbert spaces and proved linear convergence of forward–backward splitting.

\section{The Geometry of the PL-Inequality} \label{sec:geometryofPL}
Understanding the geometry of the loss landscape for an optimization task is an important part of mathematical programming. The performance of optimization algorithms highly depends on the geometric assumptions satisfied by the objective function $f$. Classical results on first-order methods often assume $f$ to be $\mu$-strongly convex for a $\mu >0$, which means that
\begin{align*}
	f(y) \ge f(x) + \langle \nabla f(x), (y-x) \rangle + \frac{\mu}{2} \|y-x\|^2,\quad \text{for all}\ x,y \in \R^d \,.
\end{align*}
In many applications, this assumption turns out to be too restrictive. For example, strong convexity implies that the global minimum is unique. 

Several generalizations of $\mu$-strong convexity have been suggested in the literature, aiming at weakening the assumptions on the geometry of $f$, while maintaining fast convergence of first-order methods to a minimum. Let us mention a few weaker assumptions that have been used, e.g., to derive convergence rates of the heavy ball ODE \eqref{eq:HBODE}. We fix a level $\ell \in \R$ and define the set of local minima at this level as
	\begin{align*}
		\mathcal N = \{x \in \R^d: x \text{ is a local minimum of } f \text{ and } f(x)= \ell\}.
	\end{align*}
	Assume that $\mathcal N$ is non-empty and denote by $d(x,\mathcal N):= \inf_{y \in \mathcal N} \|x-y\|$ the Euclidean distance of $x \in \R^d$ to $\mathcal N$. Moreover, we define the projection of $x\in\R^d$ onto $\mathcal N$ by $\proj_{\mathcal N}(x) := \argmin_{y \in \mathcal N}\|x-y\|$, which might be empty or non-unique.

Let $x^\ast$ be a point in $\mathcal N$ and $U$ be a neighborhood of $x^\ast$. For $\mu >0$,  $f$ is said to satisfy the $\mu$-quadratic growth condition on
$U$ if 
\begin{align} \label{eq:growth} \tag{QG}
	f(x)-f(x^\ast) \ge \frac{\mu}{2} d(x,\mathcal N)^2,\quad \text{for all}\ x \in U\,,
\end{align}
the metric subregularity condition with modulus $1/\mu$ on $U$ if
\begin{align} \label{eq:error} \tag{MS}
	\mu \, d(x,\mathcal N) \le \|\nabla f(x)\|,\quad \text{for all}\ x \in U\,,
\end{align}
and the $\mu$-quasi-strong convexity condition on $U$ if for all $x \in U$ and $y\in \proj_{\mathcal N}(x) \cap U$
\begin{align} \label{eq:quasar} \tag{QSC}
	f(y) \ge f(x) + \langle \nabla f(x),y-x \rangle+ \frac{\mu}{2} \|x-y \|^2.
\end{align}
All of the above conditions allow $f$ to be a non-convex objective function. For the concept of metric (sub-)regularity of sub-differentials we refer to \cite{AragonArtacho2008Characterization}.

It turns out that these assumptions, as well as the PL-inequality \eqref{eq:PL}, are intimately related to each other, see, e.g. \cite{karimi2016linear, rebjock2023fast, necoara2019linear, apidopoulos2022convergence}. Most importantly, when $f$ is $C^2$, then the conditions $\eqref{eq:PL}$, \eqref{eq:growth}, \eqref{eq:error} and \eqref{eq:quasar} are at least locally equivalent in the following sense: if one of the above equations is satisfied for $\mu >0$ on a neighborhood $U$ of a local minimum $x^\ast \in \mathcal N$, then for every $\mu'<\mu$ there exists a possibly smaller neighborhood $U' \subset U$ of $x^\ast$ such that all of the conditions are satisfied on $U'$ for the constant $\mu'$.
This has strong implications for the local analysis of optimization algorithms. In fact, when $f$ is sufficiently regular (namely $f \in C^2$) one can conduct a universal analysis assuming one (and thus all) of the above conditions. 
Therefore, Polyak's heavy ball method also achieves an accelerated local rate of convergence in the sense of Theorem~\ref{theo1} and Theorem~\ref{theo2} when the PL-inequality \eqref{eq:PL} is replaced by \eqref{eq:growth}, \eqref{eq:error} or \eqref{eq:quasar}. Consequently, for $f \in C^4$, the optimal convergence rate for the optimality gap of the heavy ball ODE \eqref{eq:HBODE}, as well as for Polyak's heavy ball method \eqref{eq:HBdiscreteintro}, is (in all of the above cases) given by $2\sqrt{\mu}$ and $\bigl( \frac{\sqrt \kappa-1}{\sqrt \kappa+1} \bigr)^2$, respectively.

The local equivalence between the PL-inequality~\eqref{eq:PL}, the $\mu$-quadratic growth condition \eqref{eq:growth} and the metric subregularity with modulus $1/\mu$ \eqref{eq:error} was shown for example in \cite{bolte2017errorbounds,aze_nonlinear_2014,rebjock2023fast}. The central observation linking these geometric assumptions on the objective function is the following fact derived in~\cite{rebjock2023fast}, which extends an earlier result in \cite{feehan2020morse}. In fact, this result is also the starting point for our analysis for deriving an accelerated local rate of convergence for the heavy ball method.

\begin{theorem}[See Theorem~2.16 and Corollary~2.17 in~\cite{rebjock2023fast}] \label{thm:rebjock} 
		Let $x^\ast \in \mathcal N$ and $U\subset \R^d$ be a neighborhood of $x^\ast$ such that $f\big|_U$ is $C^k$, for a $k \ge 2$, and satisfies the PL-inequality \eqref{eq:PL} with constant $\mu >0$. 
		Then, there exists a neighborhood $U'$ of $x^\ast$ such that the set $\mathcal M :=\mathcal N \cap U'$ 
		is a $C^{k-1}$-submanifold of $\R^d$ and the Hessian $\nabla^2 f$ satisfies
		\begin{align} \label{eq:MB}
			\langle v, \nabla^2 f(x) \,  v \rangle \ge \mu \|v\|^2 \quad \text{ for all } x \in \mathcal M, v \in N_x \mathcal M.
		\end{align}
		Here, $N_x \mathcal M$ denotes the normal space of $\mathcal M$ at $x$, i.e. $N_x \mathcal M = (T_x \mathcal M )^\perp$, where $T_x \mathcal M$ denotes the tangent space of $\mathcal M$ at $x$. 
\end{theorem}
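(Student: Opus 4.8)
The plan is to reduce the global statement to a purely local one near a fixed minimizer $x^\ast\in\mathcal M$, and there to apply a parametrized Morse-type lemma (Morse–Bott) that is available because the PL-inequality forces the Hessian to be nondegenerate transversally to the zero set of $\nabla f$. First I would fix $x^\ast\in\mathcal M$ and show that, on a neighborhood of $x^\ast$, the rank of $\Hess f(x^\ast)$ is constant along $\mathcal M$ and equal to $d-\dim\ker\Hess f(x^\ast)$; the point is that PL gives, for $x$ near $x^\ast$, the inequality $\|\nabla f(x)\|^2\ge 2\mu(f(x)-f(x^\ast))$, and a second-order Taylor expansion of both sides at a minimizer shows that $\Hess f(x^\ast)$ must be positive semidefinite with $\langle v,\Hess f(x^\ast)v\rangle\ge\mu\|v\|^2$ for every $v$ in the orthogonal complement of $\ker\Hess f(x^\ast)$. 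Concretely: write $x = x^\ast+v$ with $v\in\ker\Hess f(x^\ast)^\perp$, so $\nabla f(x)=\Hess f(x^\ast)v+O(\|v\|^2)$ and $f(x)-f(x^\ast)=\tfrac12\langle v,\Hess f(x^\ast)v\rangle+O(\|v\|^3)$; plugging into PL and letting $\|v\|\to0$ forces $\|\Hess f(x^\ast)v\|^2\ge\mu\langle v,\Hess f(x^\ast)v\rangle$, which on the nondegenerate subspace yields the spectral bound $\ge\mu$. In particular $0$ is an isolated eigenvalue of $\Hess f(x^\ast)$, separated from the rest of the spectrum by the gap $\mu$.

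Next I would use this uniform spectral gap to get the constant-rank structure of $\nabla f$ near $x^\ast$. Since $f\in C^k$, the map $\nabla f$ is $C^{k-1}$, and $D(\nabla f)(x^\ast)=\Hess f(x^\ast)$ has a spectral gap at $0$; by continuity of eigenvalues, $\Hess f(x)$ has exactly $r:=\mathrm{rank}\,\Hess f(x^\ast)$ eigenvalues bounded away from $0$ for all $x$ in a neighborhood of $x^\ast$, hence $\mathrm{rank}\,\Hess f(x)\ge r$ there. Combined with the fact that $\mathcal M=\{\nabla f=0\}$ (noted in the excerpt) and the observation that along $\mathcal M$ the Hessian annihilates the tangent directions (differentiate the identity $\nabla f\equiv 0$ on $\mathcal M$), one gets $\mathrm{rank}\,\Hess f(x)=r$ identically near $x^\ast$ on $\mathcal M$ and $T_x\mathcal M=\ker\Hess f(x)$ there. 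The constant-rank theorem applied to the $C^{k-1}$ map $\nabla f$ then shows its zero set $\mathcal M$ is, near $x^\ast$, a $C^{k-1}$-submanifold of dimension $d-r$; since $x^\ast$ was arbitrary, $\mathcal M$ is a $C^{k-1}$-submanifold globally. The Hessian bound \eqref{eq:MB} is then exactly the spectral estimate already obtained, read off on $N_x\mathcal M=(T_x\mathcal M)^\perp=\ker\Hess f(x)^\perp$.

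The main obstacle I anticipate is bookkeeping the loss of derivatives and making the constant-rank/submanifold argument genuinely rigorous rather than heuristic: one must verify that the eigenprojections onto the "large" part of the spectrum are $C^{k-1}$ in $x$ (Riesz projection / holomorphic functional calculus on the spectral gap does this), and that the implicit parametrization of $\mathcal M$ produced by the constant-rank theorem has the claimed regularity $C^{k-1}$ and not merely $C^{k-2}$. Since the paper only needs $k=4$ (so $\mathcal M\in C^3$, which is comfortably enough for the chart arguments in the later sections) and since this theorem is quoted verbatim from \cite[Theorem~2.16 and Corollary~2.17]{rebjock2023fast}, I would actually not reprove it in full; I would include the Taylor-expansion computation yielding \eqref{eq:MB} as the one self-contained piece, and cite \cite{rebjock2023fast, feehan2020morse} for the manifold structure, perhaps sketching the constant-rank argument in a remark for completeness.
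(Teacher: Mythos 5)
The paper offers no proof of this statement: it is imported verbatim from \cite[Theorem~2.16 and Corollary~2.17]{rebjock2023fast}, so your decision to cite rather than reprove is exactly what the authors do, and your self-contained Taylor computation at a fixed $x^\ast\in\mathcal M$ is sound. Indeed, from $\nabla f(x^\ast+v)=\Hess f(x^\ast)v+o(\|v\|)$ and $f(x^\ast+v)-f(x^\ast)=\tfrac12\langle v,\Hess f(x^\ast)v\rangle+o(\|v\|^2)$, the inequality \eqref{eq:PL} yields $\|\Hess f(x^\ast)v\|^2\ge\mu\langle v,\Hess f(x^\ast)v\rangle$ for all $v$, hence every nonzero eigenvalue of $\Hess f(x^\ast)$ is at least $\mu$; this gives \eqref{eq:MB} once one knows $N_{x^\ast}\mathcal M=(\ker\Hess f(x^\ast))^\perp$.

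The step by which you obtain the manifold structure, however, has a genuine gap: the constant-rank theorem requires $\Hess f=D(\nabla f)$ to have rank exactly $r$ on an \emph{open} neighborhood of $x^\ast$, while your spectral-gap argument only gives rank $\ge r$ nearby and rank $=r$ on $\mathcal M$ itself; off $\mathcal M$ the rank can jump arbitrarily close to $x^\ast$. For example, $f(x,y)=x^2(1+y^2)$ is $C^\infty$ and satisfies \eqref{eq:PL} globally with $\mu=2$, its set of minimizers is $\mathcal M=\{x=0\}$, yet $\Hess f$ has rank $1$ on $\mathcal M$ and rank $2$ at every point $(x,0)$ with $x\neq0$, so no neighborhood of the origin has constant rank and the theorem you invoke does not apply. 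The standard repair (essentially the route in \cite{rebjock2023fast}) is to apply the submersion/implicit-function theorem to the projected gradient $x\mapsto\Pi\,\nabla f(x)$, with $\Pi$ the orthogonal projection onto $\operatorname{ran}\Hess f(x^\ast)$: this map is a submersion at $x^\ast$, so its zero set is locally a $C^{k-1}$-submanifold of dimension $d-r$, and the nontrivial step your sketch hides is showing that, near $x^\ast$, every zero of $\Pi\,\nabla f$ actually lies in $\mathcal M$ --- which is where \eqref{eq:PL} (equivalently local quadratic growth or the error bound) must be used a second time. With that in place, differentiating $\nabla f\equiv0$ along $\mathcal M$ plus the dimension count gives $T_x\mathcal M=\ker\Hess f(x)$, and \eqref{eq:MB} follows as you indicate. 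Since you ultimately defer to the cited reference, this gap does not affect the paper, but the sketch as written should not be presented as a proof.
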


Note that Theorem~\ref{thm:rebjock} implies that $\ker (\nabla^2 f(x))=T_x \mathcal M$ 
for all $x \in \mathcal M$, where $\mathcal M$ denotes the manifold given by Theorem~\ref{thm:rebjock}; see \cite[Corollary~2.17]{rebjock2023fast} for more details.

In \cite[Proposition 1]{apidopoulos2022convergence} it is shown that $\mu$-quasi strong convexity on $\R^d$ implies the PL-inequality for the constant $\mu$. In the following result, we use Theorem~\ref{thm:rebjock} to prove that, at least locally, the converse is also true. 
Other relations for quasi-strong convexity have only been shown in the convex setting \cite{necoara2019linear, aujol2022convergence1}. 

\begin{lemma} Suppose that $f$ is $C^2$. Let $x^\ast \in \mathcal N$ be a local minimum and $U$ be a neighborhood of $x^\ast$. 
	If $f$ satisfies the PL-inequality with parameter $\mu$ on $U$, then for all $\mu' < \mu$ there exists a neighborhood $U' \subset U$ of $x^\ast$ such that $f$ is $\mu'$-quasi-strongly convex on $U'$.
\end{lemma}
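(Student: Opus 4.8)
The plan is to use Theorem~\ref{thm:rebjock} to produce a chart near $x^\ast$ that splits $\R^d$ into tangential directions along $\mathcal M$ and normal directions, and to verify the defining inequality~\eqref{eq:quasar} by a second-order Taylor expansion in which the normal-direction Hessian provides the needed $\frac{\mu'}{2}$ term while the tangential directions contribute nothing (there is no restoring force there, but there is also no penalty, since for $y\in\proj_{\mathcal M}(x)$ the displacement $x-y$ is, to leading order, purely normal).

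\emph{Step 1: Reduce to points on $\mathcal M$ near $x^\ast$.} By Theorem~\ref{thm:rebjock}, shrinking $U$ if necessary, $\mathcal M\cap U$ is a $C^1$-submanifold through $x^\ast$ and $\langle v,\Hess f(p)v\rangle\ge\mu\|v\|^2$ for $p\in\mathcal M$ and $v\in N_p\mathcal M$, while $\ker\Hess f(p)=T_p\mathcal M$. Fix $\mu'<\mu$ and also fix $\mu'<\mu''<\mu$. By continuity of $\Hess f$ and of the splitting $\R^d=T_p\mathcal M\oplus N_p\mathcal M$ in $p$, there is a neighborhood $W$ of $x^\ast$ such that for every $p\in\mathcal M\cap W$ and every $w\in\R^d$ one has $\langle w,\Hess f(p)w\rangle\ge \mu''\|w_N\|^2 - C\|p-x^\ast\|\,\|w\|^2$ for some constant $C$, where $w_N$ is the normal component of $w$ at $p$; shrinking $W$ this gives a clean lower bound $\langle w,\Hess f(p)w\rangle\ge \mu''\|w_N\|^2$ on a possibly smaller neighborhood once we further control the angle defect below.

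\emph{Step 2: Taylor expansion between $x$ and its projection.} Let $y\in\proj_{\mathcal M}(x)$. Since $\nabla f(y)=0$ (as $y\in\mathcal M$, which by \eqref{eq:PL} is the critical set) and $f(y)=f(x^\ast)$, the quantity to be bounded below is
\[
\langle\nabla f(x),x-y\rangle - \bigl(f(x)-f(y)\bigr) - \tfrac{\mu'}{2}\|x-y\|^2.
\]
Writing $g(t)=f(y+t(x-y))$ and using $g'(0)=0$, a standard computation gives $\langle\nabla f(x),x-y\rangle - (f(x)-f(y)) = \int_0^1 t\,\langle x-y,\Hess f(y+t(x-y))(x-y)\rangle\,\dd t$. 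So it suffices to show $\int_0^1 t\,\langle x-y,\Hess f(y+t(x-y))(x-y)\rangle\,\dd t \ge \tfrac{\mu'}{2}\|x-y\|^2$. The Hessian is evaluated along the segment, not at $y$, but by $C^2$-continuity it is within $o(1)$ of $\Hess f(y)$ as $x\to y$ uniformly; combined with Step~1 this reduces matters to the single inequality $\langle x-y,\Hess f(y)(x-y)\rangle\ge\mu''\|x-y\|^2$ on a small enough neighborhood, i.e. to showing that $x-y$ is \emph{essentially normal} to $\mathcal M$ at $y$.

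\emph{Step 3: The projection displacement is normal — the main obstacle.} The key geometric fact is that if $y\in\proj_{\mathcal M}(x)$, i.e. $y$ is a nearest point of $\mathcal M$ to $x$, then (since $\mathcal M$ is a $C^1$-manifold, hence locally $C^1$-graph-like) the vector $x-y$ is orthogonal to $T_y\mathcal M$: this is the first-order optimality condition for $\min_{y\in\mathcal M}\|x-y\|^2$. Hence $x-y\in N_y\mathcal M$ exactly, and Step~1's bound $\langle w,\Hess f(y)w\rangle\ge\mu''\|w\|^2$ for $w\in N_y\mathcal M$ (valid for $y\in\mathcal M\cap W$) applies with $w=x-y$ directly, with no angle-defect term at all. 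Choosing $U'\subset W$ small enough that for $x\in U'$ all nearest points $y$ lie in $W$ and the along-segment Hessian deviation is below $\mu''-\mu'$, we conclude $\langle\nabla f(x),x-y\rangle-(f(x)-f(y))\ge\tfrac{\mu''}{2}\|x-y\|^2\ge\tfrac{\mu'}{2}\|x-y\|^2$ for all $x\in U'$, $y\in\proj_{\mathcal M}(x)$, which is exactly \eqref{eq:quasar} on $U'$. The one point requiring care is that $\proj_{\mathcal M}(x)$ may be multivalued and $y$ need not be close to $x^\ast$ a priori; this is handled by first shrinking $U'$ so that $d(x,\mathcal M)<\dist(x^\ast,\partial W)/2$ forces every nearest point to lie in $W$, using that $\mathcal M$ is closed.
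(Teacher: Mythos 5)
Your argument is correct and is essentially the paper's own proof: both rest on Theorem~\ref{thm:rebjock} (the $C^1$-manifold structure of $\mathcal M$ near $x^\ast$ and the normal Hessian bound \eqref{eq:MB}), the fact that $x-y\in N_y\mathcal M$ for $y\in\proj_{\mathcal M}(x)$ with $x$ near $x^\ast$, and a second-order Taylor expansion plus continuity of $\Hess f$ to pass from $\mu$ to any $\mu'<\mu$. The only difference is presentational: you verify the orthogonality of the projection displacement (and that nearest points remain near $x^\ast$) directly via first-order optimality, where the paper cites Lemma~1.5 of \cite{rebjock2023fast}, and you use the integral form of the Taylor remainder instead of a $o(\|x-y\|^2)$ expansion.
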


\begin{proof} 
		By Theorem~\ref{thm:rebjock} above, there exists a neighborhood $V$ of $x^\ast$ such that $\mathcal M:=\mathcal N \cap V$ is a $C^1$-submanifold of $\R^d$.
		Using Lemma~1.5 in \cite{rebjock2023fast}, there exists a convex and compact neighborhood $K \subset V$ of $x^\ast$ such that for all $x \in K$ one has $\proj_{\mathcal N}(x)\neq \emptyset$ and for all $y \in \proj_{\mathcal N}(x)$, one has $y \in \mathcal M$ and $x-y \in N_y \mathcal M$.
		Fix a $\mu' \in (0,\mu)$ and set $\eps:= \mu-\mu'>0$. Since $f \in C^2$, the Hessian $\nabla^2 f$ is uniformly continuous on $K$. Hence, there exists a $\rho>0$ such that for all $x,y\in K$ with $\|x-y\|\le \rho$ one has $\|\nabla^2 f(x)-\nabla^2 f(y)\|\le \eps$. Let $U':= \{x \in K: \dist(x,\mathcal M)\le \rho\}$.
		
		We show that $f$ is $\mu'$-quasi strongly convex on $U'$. Let $x,y\in U'$ with $y \in \proj_{\mathcal N}(x)$. Then, $y \in \mathcal M \cap K$ and $\|x-y\|\le \rho$. For $t \in [0,1]$ set $z_t:= x+t(y-x)$ and note that $\|z_t-y\| \le \rho$ such that $\|\nabla^2 f(z_t)-\nabla^2 f(y)\|\le \eps$. Thus,
		\begin{allowdisplaybreaks}
			\begin{align*}
				f(y)-f(x) &= \int_0^1 \langle \nabla f(z_t),y-x\rangle \, dt \\
				&=  \langle \nabla f(x),y-x \rangle +  \int_0^1 \int_0^t \langle y-x,\nabla^2 f(z_s) (y-x) \rangle \, ds \, dt \\
				& \ge \langle \nabla f(x),y-x \rangle +  \frac 12 \big(\langle y-x,\nabla^2 f(y) (y-x) \rangle - \eps \|y-x\|^2 \big) \\
				& \ge \langle \nabla f(x),y-x \rangle +  \frac 12 \big(\mu \|y-x\|^2 - (\mu-\mu') \|y-x\|^2\big),
			\end{align*}
			where in the last inequality we used Theorem~\ref{thm:rebjock}.
		\end{allowdisplaybreaks}
\end{proof}

\section{Proof of Theorem~\ref{theo1}} \label{sec:continuous}
In the following section, we present the proof of Theorem~\ref{theo1}. The proof is split into several intermediate steps. Recall that $x^\ast$ is a $(\mu,L)$-regular point and by assumption $\lim_{t \to \infty} x_t=x^\ast$. For simplicity, throughout this section we assume that $f(x^\ast)=0$.

We use Theorem~\ref{thm:rebjock} to conduct a local analysis of the convergence rate in a neighborhood of $x^\ast$. In Section~\ref{sec:geometric}, we construct a coordinate chart for the normal bundle of the manifold $\mathcal M$ (given by Theorem~\ref{thm:rebjock}) that separates the tangential directions from the normal directions of $\mathcal M$. In Section~\ref{sec:linsys}, we analyze the heavy ball ODE under the coordinate chart. In Section~\ref{sec:final} we conclude our proof.

\subsection{Geometrical preparation} \label{sec:geometric}
In the first part of the proof, we employ Theorem~\ref{thm:rebjock} to construct a certain coordinate chart around the limiting stationary point $x^\ast$. This allows us to transfer the dynamical system \eqref{eq:HBODE} into a system with favorable structural properties. More specifically, we build a bundle chart for the normal tube of the manifold inspired by Fermi coordinates; see, e.g. \cite[Chapter 2]{gray2003tubes}. Additionally, we show certain Taylor expansion results for the chart and the objective function under the chart. 

Let $x^\ast$ be a $(\mu,L)$-regular point and, hence, a local minimum of $f$. 
Theorem~\ref{thm:rebjock} states that, in a neighborhood of $x^\ast$, the local minima form a $C^3$-submanifold $\mathcal M$ of $\R^d$. 
We denote $d_T:= \dim(\mathcal M)$ and $d_N:=d-d_T$. Thus, there exists an open and bounded set $\hat U \subset \R^d$ containing $x^\ast$ and a $C^3$-diffeomorphism $\hat \Phi:\hat U \to \hat \Phi(\hat U)$ such that $\hat \Phi(x^\ast)=0$ and
\begin{align*}
	\hat \Phi(\mathcal M\cap \hat U) = \hat \Phi(\hat U) \cap (\R^{d_T} \times \{0\}^{d_N}).
\end{align*}
Note that $\hat \Phi(\hat U) \cap (\R^{d_T} \times \{0\}^{d_N}) \ni (\zeta,0) \mapsto f(\hat \Phi^{-1}(\zeta,0))$ remains constant since $\hat \Phi^{-1}(\zeta,0)$ maps to a subset of $\mathcal M$  and $f$ is constant on $\mathcal M$.

After rotating the optimization problem, we may assume without loss of generality that $D \hat \Phi(x^\ast)=\Id_{\R^d}$. More precisely, let $Q$ be an orthogonal matrix satisfying 
	$$
	Q \cdot  T_{x^\ast}\mathcal M = \R^{d_T} \times \{0\}^{d_N}.
	$$
	Instead of optimizing $f$, we may equivalently consider the heavy ball scheme applied to $f \circ Q^\top$.
	In these rotated coordinates, the Jacobi matrix $D\hat \Phi(x^\ast)$ is invertible and maps $\R^{d_T} \times \{0\}^{d_N}$ onto itself. Now, we can consider $\hat \Phi_1:= (D\hat \Phi(x^\ast))^{-1} \hat \Phi$, which is a $C^3$-diffeomorphism satisfying all the above properties and, additionally, $D \hat \Phi_1 (x^\ast)=(D\hat \Phi(x^\ast))^{-1}D\hat \Phi(x^\ast)=\Id_{\R^d}$.

The next result follows immediately from \cite[Lemma~C.1]{dereich2023central}. The proof is based on the application of a Gram-Schmidt orthonormalization procedure to the column vectors of the invertible matrix $D\hat \Phi^{-1}(x)$ for each $x \in \hat \Phi(\mathcal M \cap \hat U)$. Recall that $N_x \mathcal M = (T_x \mathcal M)^\perp$ denotes the normal space of $\mathcal M$ at $x$.
\begin{lemma} \label{lem:chart} Let $\mathcal M$ be a $C^3$-submanifold of $\R^d$ and $x^\ast \in \mathcal M$ with $T_{x^\ast}\mathcal M = \R^{d_T} \times \{0\}^{d_N}$.
	Then there exists an open neighborhood $\mathcal U$ of $x^\ast$ and a $C^{2}$-diffeomorphism $\Phi: \mathcal U \to \Phi(\mathcal U)$ that satisfies:
	\begin{enumerate}
		\item[(a)] $\Phi(x^\ast)=0$, $D\Phi(x^\ast)=\Id_{\R^d}$ and $
		\Phi(\mathcal M \cap \mathcal U) = \Phi(\mathcal U) \cap (\R^{d_T} \times \{0\}^{d_N})$, 
		\item[(b)] for every $(\zeta,\theta) \in \R^{d_T} \times \R^{d_N}$ with $(\zeta, \theta) \in \Phi(\mathcal U)$ one has $(\zeta,0) \in \Phi(\mathcal U)$, and
		\item[(c)] there exists a family $(P_x:x \in \mathcal M \cap \mathcal U)$ of isometric isomorphisms $P_x:\R^{d_N} \to N_x \mathcal M$ such that for every $(\zeta, \theta) \in \Phi(\mathcal U) \subset \R^{d_T} \times \R^{d_N}$
		\[
		\Phi^{-1}(\zeta,\theta) = \Phi^{-1}(\zeta,0) + P_{\Phi^{-1}(\zeta,0)} (\theta).
		\]
	\end{enumerate}
\end{lemma}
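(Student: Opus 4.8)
The plan is to keep the base coordinates of the supplied chart $\hat\Phi$ and to replace its vertical directions by genuine straight fibers of the normal bundle of $\mathcal M$. Write $\psi(\zeta):=\hat\Phi^{-1}(\zeta,0)$ for $\zeta$ in the open set $\{\zeta\in\R^{d_T}:(\zeta,0)\in\hat\Phi(\hat U)\}$; since $\hat\Phi$ straightens $\mathcal M$, this $\psi$ is a $C^3$-parametrization of $\mathcal M\cap\hat U$, so the first $d_T$ columns $w_1(\zeta),\dots,w_{d_T}(\zeta)$ of the invertible matrix $D\hat\Phi^{-1}(\zeta,0)$ span $T_{\psi(\zeta)}\mathcal M$, while all $d$ columns $w_1(\zeta),\dots,w_d(\zeta)$ are linearly independent and depend $C^2$ on $\zeta$ (as $\hat\Phi^{-1}\in C^3$). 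I would then run Gram--Schmidt on $w_1(\zeta),\dots,w_d(\zeta)$: by \cite[Lemma~C.1]{dereich2023central} this yields a $C^2$ orthonormal frame $q_1(\zeta),\dots,q_d(\zeta)$ of $\R^d$ (the regularity survives because the $w_i$ stay uniformly linearly independent), and since $\spann\{q_1(\zeta),\dots,q_{d_T}(\zeta)\}=\spann\{w_1(\zeta),\dots,w_{d_T}(\zeta)\}=T_{\psi(\zeta)}\mathcal M$, the remaining vectors $q_{d_T+1}(\zeta),\dots,q_d(\zeta)$ form an orthonormal basis of $N_{\psi(\zeta)}\mathcal M$.

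Next I would set $P_{\psi(\zeta)}(\theta):=\sum_{i=1}^{d_N}\theta^{(i)}q_{d_T+i}(\zeta)$, an isometric isomorphism $\R^{d_N}\to N_{\psi(\zeta)}\mathcal M$, and define the $C^2$-map $\Psi(\zeta,\theta):=\psi(\zeta)+P_{\psi(\zeta)}(\theta)$. Since $D\hat\Phi(x^\ast)=\Id$, one has $w_k(0)=e_k$ and hence $q_k(0)=e_k$ for every $k$; reading off the columns of $D\Psi(0,0)$ then gives $D\Psi(0,0)=\Id_{\R^d}$, so by the inverse function theorem $\Psi$ restricts to a $C^2$-diffeomorphism of a neighborhood of $0$ onto a neighborhood of $x^\ast$. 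I would shrink that neighborhood to a product $V_0\times B$ of open balls around $0$ in $\R^{d_T}$ and $\R^{d_N}$, set $U:=\Psi(V_0\times B)$ and $\Phi:=(\Psi|_{V_0\times B})^{-1}$, so $\Phi(U)=V_0\times B$. Then (b) is immediate from the product form, and (c) holds by construction since $\Phi^{-1}(\zeta,\theta)=\Psi(\zeta,\theta)=\psi(\zeta)+P_{\psi(\zeta)}(\theta)=\Phi^{-1}(\zeta,0)+P_{\Phi^{-1}(\zeta,0)}(\theta)$ with $\Phi^{-1}(\zeta,0)=\psi(\zeta)\in\mathcal M$; in (a), the identities $\Phi(x^\ast)=0$ and $D\Phi(x^\ast)=\Id$ are clear, and $\Psi(V_0\times\{0\})=\psi(V_0)\subseteq\mathcal M$ gives the inclusion $\Phi(U)\cap(\R^{d_T}\times\{0\}^{d_N})\subseteq\Phi(\mathcal M\cap U)$.

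I expect the main obstacle to be the reverse inclusion in (a), namely arranging (after a further shrinking of $B$) that the tube $U$ meets $\mathcal M$ \emph{only} in the zero section $\Psi(V_0\times\{0\})$. To handle it I would look at the last $d_N$ components $g(\zeta,\theta)$ of $\hat\Phi\circ\Psi$: they vanish on $V_0\times\{0\}$ and satisfy $\partial_\theta g(0,0)=\Id_{\R^{d_N}}$ (once more because $q_{d_T+i}(0)=e_{d_T+i}$), so $\partial_\theta g(\zeta,\theta)$ stays invertible near $(0,0)$ and a quantitative mean-value argument forces $\theta=0$ whenever $g(\zeta,\theta)=0$ on a small enough product neighborhood; as $\hat\Phi$ maps $\mathcal M\cap\hat U$ into $\R^{d_T}\times\{0\}^{d_N}$, this yields $\Phi(\mathcal M\cap U)\subseteq\R^{d_T}\times\{0\}^{d_N}$. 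This step relies on $\mathcal M$ being an embedded, locally closed $C^3$-submanifold near $x^\ast$ --- as guaranteed by Theorem~\ref{thm:rebjock} --- so that no distant sheet of $\mathcal M$ re-enters $U$. The remaining ingredients --- preservation of $C^2$-regularity under Gram--Schmidt, the inverse function theorem, and the computation $D\Psi(0,0)=\Id$ --- are routine.
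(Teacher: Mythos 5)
Your construction is correct and follows essentially the same route as the paper, which simply invokes \cite[Lemma~C.1]{dereich2023central} together with the Gram--Schmidt orthonormalization of the columns of $D\hat\Phi^{-1}(\zeta,0)$ along the zero section to build the isometries $P_{\psi(\zeta)}$ and the tube map $\Psi(\zeta,\theta)=\psi(\zeta)+P_{\psi(\zeta)}(\theta)$. Your additional details (the inverse function theorem step and the mean-value argument for the reverse inclusion in (a)) are a sound fleshing-out of what the paper leaves to the cited lemma.
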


\begin{figure}[htb!]
	\begin{center}
		\includegraphics[width=1\textwidth]{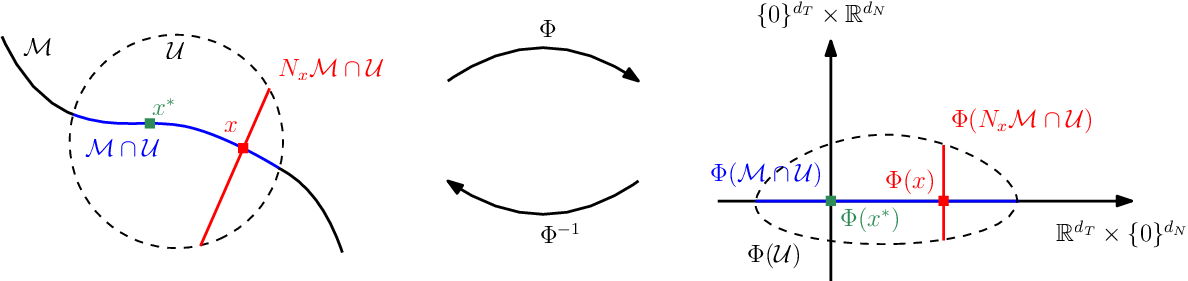}
	\end{center}
	\caption{Visual illustration of the chart $\Phi$ from Lemma~\ref{lem:chart}.}\label{fig:chart}
\end{figure}

	\begin{remark}
		We briefly explain the smoothness assumption $f \in C^4$ used in our main results. Our proof adapts Polyak's original technique \cite{polyak1964some}, which requires a $C^2$-objective, by analyzing the heavy ball method under the change of variables induced by the manifold chart of the normal bundle constructed in Lemma~\ref{lem:chart}. Accordingly, we require $f \circ \Phi^{-1} \in C^2$. Using Theorem~\ref{thm:rebjock}, if $f \in C^k$ satisfies a local PL-inequality, then the set of local minima forms a $C^{k-1}$-submanifold. However, our construction in Lemma~\ref{lem:chart} relies on a chart of the normal bundle, which is only $C^{k-2}$. Thus, assuming $f \in C^4$ ensures the existence of a chart $\Phi \in C^2$ satisfying the required normality conditions.
	\end{remark}

In the next section, we will consider the dynamical system \eqref{eq:HBODE} under the chart $\Phi$. We will show that $(\Phi(x_t))_{t \ge 0}$ can approximatively be considered as the solution to the heavy ball ODE for the objective function $\tilde f:= f \circ \Phi^{-1}$. Next, we derive properties for the chart $\Phi$ and the function $\tilde f$ that will be used in the subsequent analysis. In Figure~\ref{fig:chart} we provide a visual illustration of the map $\Phi$. The map $\Phi$ converts, isometrically, straight lines that are perpendicular to the tangent space of $\mathcal M$ at a point $x \in \mathcal M$ to straight lines that are perpendicular to $\R^{d_T} \times \{0\}^{d_N}$. Consequently, the eigenvalues of $\nabla^{2} f(x^\ast)$ coincide with those of $\nabla^{2}(f \circ \Phi^{-1})(0)$ and
	$
	\ker(\nabla^{2} (f \circ \Phi^{-1})(0))
	= \mathbb R^{d_T} \times \{0\}^{d_N}$.
We verify these statements below.

\begin{lemma} \label{lem:ftilde} The function $\tilde f:=f \circ \Phi^{-1}$ satisfies:
	\begin{enumerate}
		\item[(i)] $ \nabla \tilde f(0)  = 0  $ and $\nabla^2 \tilde f(0) \, v =0$ for all $v \in \R^{d_T}\times \{0\}^{d_N}$
		\item[(ii)] $\lambda\in\R$ is an eigenvalue of $\nabla^2 f(x^\ast)$ if and only if it is an eigenvalue of $\nabla^2(f\circ \Phi^{-1})(0)$. Moreover, one has
		\[\mu\|v\|^2 \le \langle v,\nabla^2 \tilde f(0) \,  v\rangle \le L\|v\|^2 \]
		for any $v\in \{0\}^{d_T} \times \R^{d_N}$.
	\end{enumerate} 
\end{lemma}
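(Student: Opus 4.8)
The plan is to exploit the very explicit structure of the chart $\Phi$ provided by Lemma~\ref{lem:chart}, in particular that $D\Phi(x^\ast) = \Id_{\R^d}$ and that along the submanifold $\mathcal M$ the inverse chart splits as $\Phi^{-1}(\zeta,\theta) = \Phi^{-1}(\zeta,0) + P_{\Phi^{-1}(\zeta,0)}(\theta)$. For part (i), the identity $D\tilde f(0) = Df(x^\ast)$ is just the chain rule $D\tilde f(0) = Df(x^\ast)\, D\Phi^{-1}(0)$ together with $D\Phi^{-1}(0) = (D\Phi(x^\ast))^{-1} = \Id$. For the Hessian statement, I would differentiate the curve $t \mapsto \tilde f(t\,e_i) = f(\Phi^{-1}(t\,e_i))$ for $e_i \in \R^{d_T}\times\{0\}^{d_N}$; since $\Phi^{-1}(t\,e_i, 0)$ traces a curve inside $\mathcal M$ (by part (a) of Lemma~\ref{lem:chart}) and $f$ is constant on $\mathcal M$, the function $t\mapsto \tilde f(te_i)$ is constant in $t$, hence its second derivative at $0$ vanishes; more generally, for mixed directions $e_i \in \R^{d_T}\times\{0\}^{d_N}$ and arbitrary $w$, one uses that $\nabla \tilde f$ vanishes on $\R^{d_T}\times\{0\}^{d_N}$ (again since $\tilde f$ is constant there and $0$ is a minimizer, so the gradient is zero on that whole slice), and differentiating $\nabla\tilde f(\zeta,0)\equiv 0$ in the $\zeta$-directions kills the corresponding rows/columns of $\Hess\tilde f(0)$. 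This gives $\Hess\tilde f(0)\,v = 0$ for all $v\in\R^{d_T}\times\{0\}^{d_N}$.

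For part (ii), the key point is that $\Hess\tilde f(0)$ is block-diagonal with respect to the splitting $\R^{d_T}\oplus\R^{d_N}$: the tangential block vanishes by part (i), and since $\Hess\tilde f(0)$ is symmetric, the off-diagonal blocks vanish too, so $\Hess\tilde f(0) = 0 \oplus H$ for some symmetric $H$ on $\R^{d_N}$. The eigenvalues of $\Hess\tilde f(0)$ are therefore $0$ (with multiplicity $\geq d_T$) together with the eigenvalues of $H$. On the other side, Theorem~\ref{thm:rebjock} tells us $\ker(\Hess f(x^\ast)) = T_{x^\ast}\mathcal M$, which has dimension $d_T$, so $\Hess f(x^\ast)$ also has eigenvalue $0$ with multiplicity exactly $d_T$ and the remaining eigenvalues are positive. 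To match the nonzero eigenvalues, I would compute $\Hess\tilde f(0)$ restricted to $\{0\}^{d_T}\times\R^{d_N}$ directly: for $v\in\R^{d_N}$, the curve $t\mapsto \Phi^{-1}(0, tv) = \Phi^{-1}(0,0) + P_{x^\ast}(tv) = x^\ast + t\,P_{x^\ast}(v)$ is a \emph{straight line} in $\R^d$ with direction $P_{x^\ast}(v) \in N_{x^\ast}\mathcal M$. Hence
\[
\langle v, \Hess\tilde f(0)\, v\rangle = \frac{d^2}{dt^2}\Big|_{t=0} f\bigl(x^\ast + t\,P_{x^\ast}(v)\bigr) = \langle P_{x^\ast}(v), \Hess f(x^\ast)\, P_{x^\ast}(v)\rangle.
\]
Since $P_{x^\ast}:\R^{d_N}\to N_{x^\ast}\mathcal M$ is an isometric isomorphism, this identifies the $H$-block with the restriction of $\Hess f(x^\ast)$ to its invariant subspace $N_{x^\ast}\mathcal M$ (conjugated by an isometry), so they have the same spectrum; combined with the matching multiplicities of the zero eigenvalue, this yields the eigenvalue equivalence. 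Finally, applying \eqref{eq:MB} from Theorem~\ref{thm:rebjock} with $x = x^\ast$ and the normal vector $P_{x^\ast}(v)$ gives $\langle P_{x^\ast}(v), \Hess f(x^\ast) P_{x^\ast}(v)\rangle \geq \mu\|P_{x^\ast}(v)\|^2 = \mu\|v\|^2$, which is exactly the claimed lower bound.

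The main obstacle I anticipate is making rigorous the block-diagonal / mixed-partials argument for (i) in the off-diagonal directions: one needs to justify carefully that $\nabla\tilde f$ vanishes identically on the slice $\Phi(U)\cap(\R^{d_T}\times\{0\}^{d_N})$ (not merely that $\tilde f$ is constant there, though that plus minimality does give it), and then differentiate this identity along tangential directions to conclude the relevant second-order partials vanish. This requires the slice to be relatively open in $\R^{d_T}\times\{0\}^{d_N}$, which is guaranteed by part (b) of Lemma~\ref{lem:chart}. The curve computations in (ii) are then routine once the straight-line structure from part (c) of Lemma~\ref{lem:chart} is invoked, since differentiating $t\mapsto f(x^\ast + tw)$ twice just reproduces $\langle w, \Hess f(x^\ast) w\rangle$ with no chart-distortion terms — this is precisely why the Fermi-type chart was constructed.
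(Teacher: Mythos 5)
Your proposal is correct and follows essentially the same route as the paper: the chain rule with $D\Phi(x^\ast)=\Id_{\R^d}$ for (i), and for (ii) the straight-line (Fermi) structure $\Phi^{-1}(0,t\tilde v)=x^\ast+tP_{x^\ast}(\tilde v)$ from Lemma~\ref{lem:chart}(c), the isometry $P_{x^\ast}$ to identify the spectra, and \eqref{eq:MB} for the lower bound. Your additional care in differentiating $\nabla\tilde f(\cdot,0)\equiv 0$ along tangential directions to kill the off-diagonal blocks, and in matching the zero-eigenvalue multiplicities via $\ker(\Hess f(x^\ast))=T_{x^\ast}\mathcal M$, only makes explicit steps the paper treats tersely.
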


\begin{proof} (i):  The first claim follows immediately from the chain rule and the fact that $\nabla f(x^\ast)=0$.
	For the Hessian, we compute
	\[\langle \nabla^2\tilde f(0)\, v,v\rangle = \frac{d^2}{dt^2} \, f(\Phi^{-1}(tv)) \einschraenkung_{t=0} = 0\,,\]
	for every $v \in \R^{d_T} \times \{0\}^{d_N}$, where we have used that $\zeta \mapsto f(\Phi^{-1}(\zeta,0))$ remains constant. 
	
	(ii): For $v= (0,\tilde v) \in \{0\}^{d_T} \times \R^{d_N}$ we consider
	\begin{align*}
		\langle \nabla^2 \tilde f(0) \, v, v \rangle &= \frac{d^2}{dt^2} \, f(\Phi^{-1}(0, t \tilde v)) \einschraenkung_{t=0} \\
		&= \frac{d^2}{dt^2} \, f(x^\ast + t P_{x^\ast}(\tilde v)) \einschraenkung_{t=0} = \langle \nabla^2 f(x^\ast)\,  P_{x^\ast}(\tilde v)), P_{x^\ast}(\tilde v)) \rangle ,
	\end{align*}
	where $\|v\|=\|P_{x^\ast}(\tilde v)\|$. Since $P_{x^\ast}:\R^{d_N}\to N_{x^\ast}\mathcal M$ is bijective we find that the eigenvalues of $\nabla^2 f(x^\ast)$ are equal to the eigenvalues of $\nabla^2 \tilde f(0)$ and, in particular, using Theorem~\ref{thm:rebjock} and $L$-smoothness of $f$ around $x^\ast$ it follows that
	\begin{align*}
		\mu\|v\|^2 \le \langle v,\nabla^2 \tilde f(0) \,  v\rangle \le L\|v\|^2 ,
	\end{align*}
	for all $v \in N_{x^\ast} \mathcal M$.
\end{proof}

Next, we derive several Taylor expansion results for $\Phi$ and $\tilde f$.
Denote by $\proj_T: \R^d \to \R^{d_T} \times \{0\}^{d_N}$ the orthogonal projection onto $\R^{d_T} \times \{0\}^{d_N}$ and by $\proj_N: \R^d \to \{0\}^{d_T} \times \R^{d_N}$ the orthogonal projection onto $\{0\}^{d_T} \times \R^{d_N}$. Due to Lemma~\ref{lem:chart} one has $\proj_T(\tilde x) \in \Phi(\mathcal U)$ for all $\tilde x \in \Phi(\mathcal U)$. Further shrinking the neighborhood $\mathcal U$ leads to the following useful properties. 
\begin{lemma}\label{lem:Udelta}
	For any fixed $\delta>0$ there exists a neighborhood $\mathcal U_\delta \subset \mathcal U$ of $x^\ast$ that still satisfies properties \emph{(a)-(c)} of Lemma~\ref{lem:chart} such that for all $x \in \mathcal U_\delta$, $\tilde x \in \Phi(\mathcal U_\delta)$:
	\begin{enumerate}
		\item [(i)] $\|(D\Phi^{-1}(\Phi(x)))^\top-D\Phi(x) \|  \le \frac{\delta}{16 L}$,
		\item[(ii)] $\|\nabla \tilde f(\tilde x) - \nabla^2 \tilde f(\proj_T(\tilde x))\proj_N(\tilde x)\|\le \frac{\delta}{4} \|\proj_N(\tilde x)\|$,
		\item[(iii)] $\|\nabla^2 \tilde f(\proj_T(\tilde x))-\nabla^2 \tilde f(0))\| \le \frac{\delta}{4}$,
		\item[(iv)] $\|D \Phi(x)\| \le 2$, $\|D \Phi^{-1}(\tilde x)\| \le 2$, and $\sup_{x \in \mathcal U_\delta} \|D^2\Phi(x)\|<\infty$ and 
		\item[(v)] $\|\grad \tilde f(\tilde x)-\grad\tilde f(\proj_T(\tilde x))\|
			\le 2 L \|\proj_N(\tilde x)\|$.
	\end{enumerate}
\end{lemma}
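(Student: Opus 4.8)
The plan is to obtain the four properties simultaneously by shrinking $U$ to a neighborhood whose image under $\Phi$ is a \emph{product of balls} in chart coordinates, and then to invoke continuity of the relevant derivatives together with one short Taylor-expansion argument for item~(ii). Concretely, since $\Phi(U)$ is open and contains $0$, I would first fix $r>0$ so small that the closed box $\overline{B_{d_T}(0,r)}\times\overline{B_{d_N}(0,r)}$ is contained in $\Phi(U)$, and set $U_\delta:=\Phi^{-1}\bigl(B_{d_T}(0,r)\times B_{d_N}(0,r)\bigr)$. With this choice properties (a)--(c) of Lemma~\ref{lem:chart} transfer automatically: $\Phi(\mathcal M\cap U_\delta)=\Phi(U_\delta)\cap(\R^{d_T}\times\{0\}^{d_N})$ since $\Phi(U_\delta)\subset\Phi(U)$; property (b) holds because $\zeta\in B_{d_T}(0,r)$ whenever $(\zeta,\theta)\in B_{d_T}(0,r)\times B_{d_N}(0,r)$; and the isometries $P_x$ of (c) just restrict. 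The parameter $r$ is then shrunk further, finitely often, to enforce (i)--(iv).

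Properties (i), (iii) and the first two bounds of (iv) follow purely from continuity at the base point. Because $\Phi$ is a $C^2$-diffeomorphism and $f\in C^4$, the maps $x\mapsto D\Phi(x)$, $x\mapsto(D\Phi^{-1}(\Phi(x)))^\top$, $\tilde x\mapsto D\Phi^{-1}(\tilde x)$ and $\tilde x\mapsto\Hess f(\proj_T(\tilde x))$ are continuous on $U$ resp.\ $\Phi(U)$, and at the base point one has $D\Phi(x^\ast)=D\Phi^{-1}(0)=\Id_{\R^d}$, so that $\|(D\Phi^{-1}(0))^\top-D\Phi(x^\ast)\|=0<\tfrac{\delta}{8L}$, $\|D\Phi(x^\ast)\|=\|D\Phi^{-1}(0)\|=1<2$, and $\Hess f(\proj_T(\tilde x))-\Hess f(0)$ vanishes at $\tilde x=0$. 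Hence each of these inequalities persists on $U_\delta$ for $r$ small. The last assertion of (iv) needs nothing extra: $D^2\Phi$ is continuous on $U$, hence bounded on the compact set $\overline{U_\delta}\subset U$.

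Item~(ii) is the only one requiring a computation, and it is here that the product structure of $\Phi(U_\delta)$ is used. Write $\tilde f:=f\circ\Phi^{-1}$, which is $C^2$, so $\Hess\tilde f$ is continuous on $\Phi(U)$. For $(\zeta,0)\in\Phi(U)$ the point $\Phi^{-1}(\zeta,0)$ lies in $\mathcal M=\{\nabla f=0\}$, whence the chain rule gives $\nabla\tilde f(\zeta,0)=(D\Phi^{-1}(\zeta,0))^\top\nabla f(\Phi^{-1}(\zeta,0))=0$; that is, $\nabla\tilde f$ vanishes on the tangential slice. Writing $\tilde x=\proj_T(\tilde x)+\proj_N(\tilde x)$ and integrating $\Hess\tilde f$ along the segment $s\mapsto\proj_T(\tilde x)+s\,\proj_N(\tilde x)$, $s\in[0,1]$ --- which, by the product form of $\Phi(U_\delta)$, remains inside $\Phi(U_\delta)$ --- gives
\[
\nabla\tilde f(\tilde x)-\Hess\tilde f(\proj_T(\tilde x))\,\proj_N(\tilde x)=\int_0^1\bigl(\Hess\tilde f(\proj_T(\tilde x)+s\,\proj_N(\tilde x))-\Hess\tilde f(\proj_T(\tilde x))\bigr)\proj_N(\tilde x)\,\dd s\,,
\]
so the left-hand side is bounded in norm by $\bigl(\sup_{a,b\in\overline{\Phi(U_\delta)}}\|\Hess\tilde f(a)-\Hess\tilde f(b)\|\bigr)\,\|\proj_N(\tilde x)\|$. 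Since $\overline{\Phi(U_\delta)}$ is a compact subset of $\Phi(U)$ of diameter $O(r)$, continuity of $\Hess\tilde f$ forces this oscillation below $\tfrac{\delta}{4}$ once $r$ is small enough, which is (ii). Taking $r$ to meet all the smallness conditions at once finishes the proof.

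The only real care needed is bookkeeping: one must ensure that the chart properties (a)--(c) and, above all, the domain condition used in the integral representation for (ii), are not destroyed by shrinking --- and this is precisely why it is convenient to shrink $U$ to the $\Phi$-preimage of a product of balls rather than to some arbitrary smaller neighborhood.
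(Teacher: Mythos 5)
Your proof is correct and follows essentially the same route as the paper's (which is only a one-line sketch): continuity of $D\Phi$, $D\Phi^{-1}$, $D^2\Phi$ and of the Hessian at the base point for (i), (iii), (iv), plus a Taylor/integral argument using $\nabla\tilde f\equiv 0$ on the tangential slice for (ii). Your explicit choice of $U_\delta$ as the $\Phi$-preimage of a product of balls is a nice touch that makes the segment argument in (ii) and the preservation of properties (a)--(c) airtight, details the paper leaves implicit.
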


	\begin{proof}
		Since $\Phi$ is $C^2$ on $\mathcal U$ with $\Phi(x^\ast)=0$, $D\Phi(x^\ast)=\Id_{\mathbb R^d}$ and $D\Phi^{-1}(0)=\Id_{\mathbb R^d}$, see Lemma~\ref{lem:chart},
		there exists a neighborhood $\mathcal U_\delta \subset \mathcal U$ such that for all $x \in \mathcal U_\delta$
		\[
		\|(D\Phi^{-1}(\Phi(x)))^\top-D\Phi(x) \|  \le \frac{\delta}{16 L}.
		\]
		Moreover, we may further shrink $\mathcal U_\delta$ so that for all $x \in \mathcal U_\delta$, $\tilde x \in \Phi(\mathcal U_\delta)$
		\[
		\|D\Phi(x)\| \le 2, \qquad \|D\Phi^{-1}(\tilde x)\| \le 2,
		\]
		and $\sup_{x\in\mathcal U_\delta}\|D^2\Phi(x)\|<\infty$.
		
		Next, note that $\tilde f\in C^2(\Phi(\mathcal U))$ and for all $\tilde x=\proj_N(\tilde x)+\proj_T(\tilde x)\in\Phi(\mathcal U)$ one has $\nabla \tilde f(\proj_T(\tilde x))=0$. Thus, a first-order Taylor expansion together with the continuity of $\nabla^2 \tilde f$ ensures the existence of a neighborhood $\mathcal U_\delta$ such that
		\[
		\|\nabla \tilde f(\tilde x) - \nabla^2 \tilde f(\proj_T(\tilde x))\proj_N(\tilde x)\|
		\le \frac{\delta}{4}\|\proj_N(\tilde x)\|.
		\]
		Similarly, since $\nabla^2 \tilde f$ is continuous, we may shrink $\mathcal U_\delta$ so that for all $\tilde x \in \Phi(\mathcal U_\delta)$,
		\[
		\|\nabla^2 f(\proj_T(\tilde x))-\nabla^2 f(0)\| \le \frac{\delta}{4}.
		\]
		
		Finally, using that $\|\nabla^2 \tilde f(0)\|\le L$ and the continuity of $\nabla^2 \tilde f$, we may assume that
		\[
		\|\nabla^2 \tilde f(\tilde x)\| \le 2L
		\quad\text{for all }\tilde x\in \Phi(\mathcal U_\delta).
		\]
		By the mean value theorem applied to $\nabla \tilde f$ along the segment between $\proj_T(\tilde x)$ and $\tilde x$, we obtain (after possibly shrinking $\mathcal U_\delta$)
		\[
		\|\nabla \tilde f(\tilde x)-\nabla \tilde f(\proj_T(\tilde x))\|
		\le 2L \|\proj_N(\tilde x)\|.
		\]
		Choosing $\mathcal U_\delta$ as the intersection of the neighborhoods required above completes the proof.
	\end{proof}

\subsection{Dynamical system under the coordinate chart} \label{sec:linsys}
In the following, $\mathcal M$ denotes the $C^3$-manifold that is given by Theorem~\ref{thm:rebjock}, $\mathcal U \subset \R^d$ denotes the neighborhood of $x^\ast$ and $\Phi:\mathcal U \to \Phi(\mathcal U)$ denotes the $C^2$-mapping that is given by Lemma~\ref{lem:chart} and, for all $\delta>0$, $\mathcal U_\delta$ denotes the neighborhood of $x^\ast$ that is given by Lemma~\ref{lem:Udelta}.
By assumption in Theorem~\ref{theo1}, $x_t \to x^\ast$ and $v_t\to 0$ as $t\to\infty$. Thus, for arbitrary $\delta>0$ we can choose $t_\delta >0$ such that $x_t\in \mathcal U_\delta$ and $\sup_{x \in \mathcal U_\delta}\|D^2 \Phi(x)\| \, \|v_t\| \le \frac{\delta}{8} $ for all $t \ge t_\delta$. We now consider the change of variables $(\tilde x_t)_{t \ge t_\delta} := (\Phi(x_t))_{t \ge t_\delta}$ and $(\tilde v_t)_{t \ge t_\delta} := (D\Phi(x_t)  v_t)_{t \ge t_\delta}$. 
Applying this change of variables to \eqref{eq:HBODE}, we obtain for all $t \ge t_\delta$
\begin{align} \label{eq:systemlin}
	\begin{split}
		\frac{d}{dt} \tilde x_t & = D\Phi(x_t) v_t = \tilde v_t ,\\
		\frac{d}{dt} \tilde v_t & = D^2 \Phi(x_t) [v_t, v_t] + D\Phi(x_t) \cdot (- \alpha v_t - \grad f(x_t)) \\
		&= -\alpha \tilde v_t -\grad \tilde f(\tilde x_t)  +\zeta_1(x_t,v_t)+ \zeta_2(x_t, \tilde x_t),
	\end{split}
\end{align}
where we have defined $\zeta_1(x_t,v_t):=D^2 \Phi(x_t) [v_t, v_t]$ with 
\begin{align*}
	\|\zeta_1(x_t,v_t)\| 
	\le \frac{\delta}{8} \|v_t\|\le \frac{\delta}{4} \|\tilde v_t\|,
\end{align*}
due to Lemma~\ref{lem:Udelta} (iv),
and $\zeta_2(x_t,\tilde x_t):=\nabla \tilde f(\tilde x_t)  -  D\Phi(x_t)   \grad f(x_t)$ satisfying
\begin{align} \begin{split} \label{eq:2374552}
		\| \zeta_2(x_t,\tilde x_t)\| &= \|\nabla \tilde f(\tilde x_t)  -  D\Phi(x_t)   \grad f(x_t)\|\\ & = \| (D\Phi^{-1}(\tilde x_t))^\top \,  \nabla f(x_t)  -  D\Phi(x_t) \grad f( x_t)\|\\ & \le \frac{\delta}{16 L}\|\grad  f(x_t)\| \le \frac{\delta}{8 L}\|\grad  \tilde f(\tilde x_t)\|\\
		&\le \frac{\delta}{8 L}\|\grad \tilde f(\proj_N(\tilde x_t)+\proj_T(\tilde x_t))-\grad\tilde f(\proj_T(\tilde x_t))\|\\
		&\le \frac{\delta}{4}\|\proj_N(\tilde x_t)\|,
	\end{split}
\end{align}
where in the second line we used Lemma~\ref{lem:Udelta} (i) and (iv), and in the last line we used $0 = \nabla \tilde f(\proj_T(\tilde x_t)) $ and  Lemma~\ref{lem:Udelta} (v). The ODE \eqref{eq:systemlin} is initialized by 
\begin{align*}
	(\tilde x_{t_\delta},\tilde v_{t_\delta})=(\Phi(x_{t_\delta}),D\Phi(x_{t_\delta})v_{t_\delta}),
\end{align*}
where $(x_t,v_t)_{t\ge 0}$ is a solution of \eqref{eq:HBODE}.
Next, we linearize $\nabla \tilde f$ around $0$ and observe that, by the choice of $\mathcal U_\delta$, see Lemma~\ref{lem:Udelta}, for all $\tilde x \in \Phi(\mathcal U_\delta)$ it holds that
\begin{align} \begin{split} \label{eq:9909}
		\|\nabla \tilde f(\tilde x) - \nabla^2 \tilde f(0) \, \proj_N(\tilde x)\| &\le \|\nabla \tilde f(\tilde x) - \nabla^2 \tilde f(\proj_T(\tilde x)) \, \proj_N(\tilde x)\| \\
		&+\| \nabla^2 \tilde f(\proj_T(\tilde x)) \, \proj_N(\tilde x) - \nabla^2 \tilde f(0) \, \proj_N(\tilde x)\| \\
		& \le \frac{\delta}{2} \|\proj_N(\tilde x)\|\,.
	\end{split}
\end{align}
In summary, the norms of the defined error terms $\zeta_1$ and $\zeta_2$, as well as the linearization error in \eqref{eq:9909}, depend only on $\|\tilde v_t\|$ and $\|\proj_N(\tilde x_t)\|$, but not on $\|\proj_T(\tilde x_t)\|$. This is crucial since the point $x^\ast \in \mathcal M$ is only contracting in the normal directions and there is no restoring force in the tangential directions. 

To proceed, we consider a restriction of the dynamical system \eqref{eq:systemlin}, where we omit the tangential directions of $(\tilde x_t)_{t \ge t_\delta}$. Using the computations above, we can write
\begin{align} 
	\begin{split}
		\label{eq:ODEproof}
		\frac{d}{dt} \proj_N(\tilde x_t) &= \proj_N(\tilde v_t) ,\\
		\frac{d}{dt} \tilde v_t 
		&= -\alpha \tilde v_t - \nabla^2 \tilde f(0) \proj_N(\tilde x_t) +\xi_t,
	\end{split}
\end{align}
for all $t \ge t_\delta$, where 
\[ 
	\xi_t:=\zeta_1(x_t,v_t) + \zeta_2(x_t,\tilde x_t) + \nabla^2 \tilde f(0) (\proj_N(\tilde x_t)) - \nabla \tilde f(\tilde x_t) 
\]
with
\begin{align} \label{eq:33333}
	\|\xi_t\| \le \delta\, \Bigl\| \begin{pmatrix}
		\proj_N(\tilde x_t) \\ \tilde v_t 
	\end{pmatrix} \Bigr\|\,.
\end{align}
Let $(x_t')_{t \ge t_\delta} \subset \R^{d_N}$ be defined via $\proj_N(\tilde x_t)=(0,x_t') \in \{0\}^{d_T} \times \R^{d_N}$ for all $t \ge t_\delta$.
We can rewrite the ODE \eqref{eq:ODEproof} in matrix notation as 
\begin{equation}\label{eq:ODEproof_matrix}
	\frac{d}{dt} \begin{pmatrix}
		x_t' \\ \tilde v_t
	\end{pmatrix} 
	=
	A
	\begin{pmatrix}
		x_t' \\ \tilde v_t
	\end{pmatrix} 
	+
	\begin{pmatrix}
		0 \\ \xi_t,
	\end{pmatrix}
\end{equation}
for initial $(x'_{t_\delta},\tilde v_{t_\delta})^\top\in\R^{d_N+d}$ with $\proj_N(\Phi(x_{t_\delta}))=(0,x'_{t_\delta})$ and $\tilde v_{t_\delta} = D\Phi(x_{t_\delta})v_{t_\delta}$, and a matrix $A \in \R^{(d_N+d)\times (d_N+d)}$ that can be written as
\begin{align*}
	A = 
	\begin{pmatrix}
		0 & 0 & \Id_{\R^{d_{N}}} \\
		0 & -\alpha \Id_{\R^{d_T}} & 0 \\ 
		-H & 0 & -\alpha \Id_{\R^{d_N}}
	\end{pmatrix},
\end{align*}
where $H \in \R^{d_N \times d_N}$ denotes the restriction of $\nabla^2 \tilde f(0)$ to $\{0\}^{d_T} \times \R^{d_N}$. Due to Lemma~\ref{lem:ftilde}, $H$ is a symmetric matrix satisfying $\spec(H) \subset [\mu,\infty)$, where $\spec(M)$ denotes the spectrum of a matrix $M$. 
Using the unique solution of \eqref{eq:ODEproof_matrix} given by 
\begin{align} \label{eq:11111}
	\begin{pmatrix}
		x_t' \\ \tilde v_t 
	\end{pmatrix} 
	= 
	\exp ((t-t_\delta) A)
	\begin{pmatrix}
		x_{t_\delta}' \\ \tilde v_{t_\delta}
	\end{pmatrix} 
	+ 
	\int_{t_\delta}^t \exp ((t-s) A) \begin{pmatrix}
		0 \\ \xi_s
	\end{pmatrix} ds\,,
\end{align}
we derive the following properties.

\begin{lemma}
	Let $\alpha,\delta>0$ be arbitrary and $(x'_t,\tilde v_t)_{t\ge t_\delta}$ be the unique solution of \eqref{eq:ODEproof_matrix} with initial $(x'_{t_\delta},\tilde v_{t_\delta})\in\R^{d_N} \times \R^d$. Then
	\begin{enumerate}
		\item[(i)] $ -\max_{\rho \in \spec(A)}\operatorname{Re}(\rho) \ge  \frac 12 (\alpha-\sqrt{\max(0,\alpha^2-4\mu)}):= m(\alpha)>0$, and,
		\item[(ii)] for any $\eta\in(-m(\alpha),0)$ there exists a constant $C_\eta\ge0$ such that
		\begin{equation} \label{eq:key}
			\Bigl\| \begin{pmatrix}
				x_t' \\ \tilde v_t
			\end{pmatrix} \Bigr\| \le C_\eta \Bigl\| \begin{pmatrix}
				x_{t_\delta}' \\ \tilde v_{t_\delta}
			\end{pmatrix} \Bigr\| e^{(\eta + C_\eta  \delta)(t-t_{\delta})} ,\quad \text{ for all } t \ge t_\delta\,.
		\end{equation}
	\end{enumerate}
\end{lemma}
\begin{proof}
	We begin with the first claim (i) 
	and compute the eigenvalues of $A$. First note that for $i=d_N+1, \dots, d$ the $i$-th vector in the standard basis of $\R^{d+d_N}$, denoted as $e_i$, is an eigenvector of $A$ with $A e_i = - \alpha e_i$. Thus, $-\alpha$ is an eigenvalue of $A$ and we can restrict our attention to computing the spectrum of 
	\begin{align*}
		\tilde A :=     \begin{pmatrix}
			0 & \Id_{\R^{d_{N}}} \\
			-H & -\alpha \Id_{\R^{d_N}}
		\end{pmatrix}.
	\end{align*}
	Since $H$ is a symmetric and positive definite matrix with eigenvalues bounded from below by $\mu$, see Lemma~\ref{lem:ftilde}, there exists an orthogonal matrix $Q \in \R^{d_N \times d_N}$ with $Q^\top Q= \Id_{R^{{d_N}}}$ and
	\begin{align*}
		H = Q^\top \diag(\lambda_1, \dots, \lambda_{d_N}) Q\,,
	\end{align*}
	where $ \diag(\lambda_1, \dots, \lambda_{d_N})$ is the diagonal matrix with entries $\lambda_1, \dots, \lambda_{d_N} \in [\mu, \infty)$. 
		Consider the matrix $\tilde Q \in \R^{2d_N \times 2 d_N}$ given by
		\begin{align*}
			\tilde Q = \begin{pmatrix}
				Q & 0 \\
				0 & Q.
			\end{pmatrix}
		\end{align*}
		Then, $\tilde Q^\top \tilde Q = \Id_{\R^{2d_N}}$ and 
		\begin{align*}
			\spec(\tilde A) = \spec(\tilde Q^\top \tilde A \tilde Q) = \spec \Bigl( \begin{pmatrix}
				0 & \Id_{\R^{d_{N}}} \\
				-\diag(\lambda_1, \dots, \lambda_{d_N}) & -\alpha \Id_{\R^{d_N}}
			\end{pmatrix} \Bigr).
		\end{align*}
	Hence, it clearly suffices to compute the eigenvalues of 
	\begin{align*}\tilde A_i:=
		\begin{pmatrix}
			0 & 1 \\
			-\lambda_i & - \alpha 
		\end{pmatrix}
	\end{align*}
	for all $i = 1, \dots, d_N$. A straightforward computation shows that the eigenvalues of the latter matrix are given by
	\begin{align*}
		\eta_i^{(1)}=-\frac{1}{2} (\alpha-\sqrt{\alpha^2-4\lambda_i}) \quad \text{ and }\quad \eta_i^{(2)}= -\frac{1}{2} (\alpha+\sqrt{\alpha^2-4\lambda_i}). 
	\end{align*}
	Altogether, we find that $\spec(A)=\{- \alpha, \eta_i^{(1)},\eta_i^{(2)}:i=1, \dots, d_N\}$ so that \[\max_{\rho \in \spec(A)} \operatorname{Re} (\rho) \,  \le  -\frac 12(\alpha-\sqrt{\max(0,\alpha^2-4\mu)})=:- m(\alpha)\,.\]
	
	Next, we consider the second claim (ii). 
	Let $\eta\in(-m(\alpha),0)$. Then (see, e.g.~\cite[Lemma~7]{polyak1964some}),
	there exists a $C_\eta \ge 0$ such that 
	\begin{align} \label{eq:22222}
		\sup_{t \ge 0} e^{-\eta t} \|e^{A t} \| \le C_\eta.
	\end{align}
	Define $(g(t))_{t \ge t_\delta}$ via $g(t) := e^{-\eta (t-t_\delta)} \Bigl\| \begin{pmatrix}
		x_t' \\ \tilde v_t
	\end{pmatrix} \Bigr\|$. Combining \eqref{eq:33333}, \eqref{eq:11111} and~\eqref{eq:22222} one obtains for all $t \ge t_\delta$ the inequality 
	\begin{align*}
		g(t) &\le e^{-\eta (t-t_\delta)} \|\exp ((t-t_\delta) A)\|  \Bigl\| \begin{pmatrix}
			x_{t_\delta}' \\ \tilde v_{t_\delta}
		\end{pmatrix} \Bigr\| 
		+
		e^{-\eta (t-t_\delta)}  \int_{t_\delta}^t \|\exp ((t-s) A)\| \, \Bigl\|\begin{pmatrix}
			0 \\ \zeta_s,
		\end{pmatrix}\Bigr\| \, ds \\
		& \le C_\eta \Bigl\| \begin{pmatrix}
			x_{t_\delta}' \\ \tilde v_{t_\delta}
		\end{pmatrix} \Bigr\|
		+  C_\eta \int_{t_\delta}^t e^{- \eta (s-t_\delta) } \Bigl\|\begin{pmatrix}
			0 \\ \zeta_s,
		\end{pmatrix}\Bigr\| \, ds \\
		& \le  C_\eta \Bigl\| \begin{pmatrix}
			x_{t_\delta}' \\ \tilde v_{t_\delta}
		\end{pmatrix} \Bigr\|
		+  C_\eta \delta \int_{t_\delta}^t  g(s) \, ds.
	\end{align*}
	Using Gronwall's inequality, we get for all $t\ge t_\delta$ that
	\begin{align*}
		g(t) \le C_\eta \Bigl\| \begin{pmatrix}
			x_{t_\delta}' \\ \tilde v_{t_\delta}
		\end{pmatrix} \Bigr\| e^{C_\eta \delta (t-t_\delta)},
	\end{align*}
	such that
	\begin{align*}
		\Bigl\| \begin{pmatrix}
			x_t' \\ \tilde v_t
		\end{pmatrix} \Bigr\| \le C_\eta \Bigl\| \begin{pmatrix}
			x_{t_\delta}' \\ \tilde v_{t_\delta}
		\end{pmatrix} \Bigr\| e^{(\eta + C_\eta  \delta)(t-t_{\delta})}\,.
	\end{align*}
\end{proof}

\subsection{Final step} \label{sec:final} 
Now, we are ready to finish the proof of Theorem~\ref{theo1}. 
\begin{proof}[Proof of Theorem~\ref{theo1}]
	Note that
	\begin{align*}
		\|x_t-x^\ast\| &=\lim_{r\to\infty} \|x_t-x_r\| \le \lim_{r\to\infty} \int_t^r \|v_s\|\,ds \le \int_t^\infty \|v_s\| \, ds \le 2\int_t^\infty \|\tilde v_s\| \, ds \, ,
	\end{align*}
	where we have used that $(x_t)_{t\ge0}$ converges as $t\to\infty$, $x \mapsto \|x_t-x\|$ is continuous and Lemma~\ref{lem:Udelta} (iv).
	Let $\eps \in (0,m(\alpha))$ be arbitrary. For the choice $\eta := -m(\alpha) + \frac{\eps}{3}$ and $\delta = \frac{\eps}{3 C_\eta}$ in \eqref{eq:key} we obtain
	\begin{align} \begin{split} \label{eq:236452672}
			\limsup\limits_{t \to \infty} e^{( m(\alpha) - \eps) t} &\|x_t-x^\ast\| \le  \limsup\limits_{t \to \infty} 2 e^{( m(\alpha) - \eps) t}  \int_t^\infty  \Bigl\| \begin{pmatrix}
				x_s' \\ \tilde v_s
			\end{pmatrix} \Bigr\| \, ds \\
			& \le 2 C_{\eta} \Bigl\| \,  \begin{pmatrix}
				x_{t_\delta}' \\ \tilde v_{t_\delta}
			\end{pmatrix} \Bigr\|e^{(m(\alpha)-\eps)t_\delta} \Bigl( \limsup\limits_{t \to \infty} \int_t^\infty e^{-\frac{\eps}{3}(s-t_{\delta})}\, ds \Bigr)=0.
		\end{split}
	\end{align}
	This proves the first assertion.
	
	Similarly, we are able to compute the convergence rate for $(f(x_t))_{t \ge 0}$. Recall that, due to Lemma~\ref{lem:Udelta} (iii), one obtains for $\delta \le 4L$ and for all $x \in \Phi(\mathcal U_\delta) \cap (\R^{d_T} \times \{0\}^{d_N})$
	\begin{align*}
		\langle \nabla^2 \tilde f(x) v,v \rangle \le \langle \nabla^2 \tilde f(0) v,v \rangle +\frac{\delta}{4} \|v\|^2  \le 2 L \|v\|^2 \, .
	\end{align*}
	Using $\nabla \tilde f(x)=0$ for $x \in \Phi(\mathcal U_\delta) \cap (\R^{d_T} \times \{0\}^{d_N})$ and $\tilde f \in C^2$, we can apply Taylor's theorem and use \eqref{eq:key} for the choice $\eta := -m(\alpha) + \frac{\eps}{3}$ and $\delta = \min(4L, \frac{\eps}{3 C_\eta})$ to deduce the second assertion, 
	\begin{align} \label{eq:993725}
		\begin{split}
			\limsup_{t \to \infty} e^{2( m(\alpha) - \eps) t}  (f(x_t)- f(x^\ast)) & = \limsup_{t \to \infty} e^{2( m(\alpha) - \eps) t}  ( \tilde f(\tilde x_t)- \tilde f(0))  \\
			& \le \limsup_{t \to \infty} e^{2( m(\alpha) - \eps) t}  4 L \,  \|\proj_N (\tilde x_t)\|^2 =0\,.
		\end{split}
	\end{align}
\end{proof}

\section{Proof of Theorem~\ref{theo2}} \label{sec:discrete}
In this section, we now turn to the proof of Theorem~\ref{theo2}, which establishes the accelerated local convergence of Polyak's heavy ball method \eqref{eq:HBdiscreteintro} in discrete time. Let $x^\ast$ be a $(\mu,L)$-regular point and, throughout this section, assume without loss of generality that $f(x^\ast)=0$. Moreover, without loss of generality, we set $N=0$ in the statement of Theorem~\ref{theo2}.

\paragraph{Structure of the proof} Contrary to the continuous time setting, there exists no result that directly proves convergence of the sequence $(x_n)_{n \in \N_0}$ for all pairs of hyperparameters $\beta \in (0,1)$ and $\gamma\in (0,\frac{2(1+\beta)}{L})$.
For example, the axiomatic Lyapunov-based convergence framework presented in \cite{attouch2013convergence, ochs2018local} guarantees local convergence of the iterates $(x_n)_{n \in \N_0}$ only in the overdamped regime, namely for the hyperparameters $\beta \in (0,1)$ and $\gamma \in (0,\frac{2(1-\beta)}{L})$.
Moreover, there exists a non-empty subset of the hyperparameters that are covered by Theorem~\ref{theo2} such that convergence cannot be guaranteed for arbitrary initial condition $x_0,x_1 \in \R^d$, see~\cite{goujaud2023provable}. Therefore, our analysis focuses on local convergence, and the structure of the proof is as follows. 
\begin{itemize} 
	\item \textit{Local convergence:} First, in Lemma~\ref{lem:conv}, we show that there exists a neighborhood $\mathcal U_\delta$ of $x^\ast$ such that if $(x_n)_{n \in \N_0}$ remains in $\mathcal U_\delta$ for all $n \in \N_0$, then $(x_n)_{n \in \N_0}$ converges to a local minimum in $\mathcal U_\delta$, which we denote by $x_\infty$. We also derive a suboptimal a-priori rate of convergence.
	\item \textit{Local attraction:} Next, in Lemma~\ref{lem:conv2}, we prove that there exists an $r>0$ with $B_r(x^\ast) \subset \mathcal U_\delta$ such that $x_0,x_1 \in  B_r(x^\ast)=:V$ implies that $(x_n)_{n\in\N_0}$ stays in $\mathcal U_\delta$ and thus, using the first step, converges to $x_\infty$. 
	\item \textit{Local acceleration:} Finally, we carry out a refined local analysis in the neighborhood of $x_\infty$ and prove the asymptotically optimal rates for Polyak's heavy ball method. This finishes the proof of Theorem~\ref{theo2}.
\end{itemize}
For the remaining section, we fix $\delta >0$ and denote by $\mathcal U_\delta$ a neighborhood of $x^\ast$ that satisfies the conclusion of Lemma~\ref{lem:Udelta}. 
We will show that there exists a $\delta >0$ such that $x_n\in \mathcal U_\delta$ for all $n\in\N_0$ implies that $(x_n)_{n \in \N_0}$ converges to a local minimum $x_\infty \in \mathcal U_\delta$.
For sufficiently small $\delta>0$, we can assume without loss of generality that $\mathcal U_\delta \subset \overline{B_\delta(x^\ast)} \subset \mathcal U$, where $\mathcal U$ denotes the set given by Lemma~\ref{lem:chart}, and there exists a constant $C_\Phi \ge 0$ such that $D\Phi$ is $C_\Phi$-Lipschitz continuous on $\overline{B_\delta(x^\ast)}$. The last assumption can be satisfied since $\Phi: \mathcal U \to \Phi(\mathcal U)$ is $C^2$. Furthermore, after further shrinking $\mathcal U_\delta$, we can assume that $\mathrm{conv}(\mathcal U_\delta) \subset \overline{B_\delta(x^\ast)} $ and that Lemma~\ref{lem:Udelta} (iv) and $\|\nabla^2 f(x)\|\le L$ also holds for convex combinations of points in $\mathcal U_\delta$, respectively convex combinations of points in $\Phi(\mathcal U_\delta)$.

\subsection{Technical preparation} Consider the dynamics under the chart $\Phi$ defined as $(\tilde x_n)_{n \in \N_0}:= (\Phi(x_n))_{n \in \N_0}$. Since 
\begin{align*}
	D\Phi(x_n)(x_{n+1}-x_{n}) = -\gamma D\Phi(x_n)\, \nabla f(x_{n}) + \beta D\Phi(x_n) (x_n-x_{n-1}),
\end{align*}
we can write 
\begin{align*}
	\tilde x_{n+1} = \Phi(x_{n+1}) &= \Phi(x_n) -\gamma \nabla \tilde f(x_n) + \beta (\Phi(x_n)-\Phi(x_{n-1})) + \zeta(x_n,x_{n-1}, \tilde x_n),\\
	&= \tilde x_n - \gamma \nabla \tilde f(x_n) + \beta (\tilde x_n-\tilde x_{n-1}) + \zeta(x_n,x_{n-1}, \tilde x_n),
\end{align*}
with $\zeta(x_n,x_{n-1}, \tilde x_n) :=  \zeta_1(x_n,x_{n-1})+ \zeta_2(x_n, \tilde x_n)+\zeta_3(x_n,x_{n-1})$ for 
\begin{align*}
	\zeta_1(x_n,x_{n-1}) &:= \Phi(x_{n+1})-\Phi(x_{n})-D\Phi(x_{n})(x_{n+1}-x_{n}), \\
	\zeta_2(x_n, \tilde x_n) &:= \gamma ( \nabla \tilde f(\tilde x_n) - D\Phi(x_n)\, \nabla f(x_{n})) \quad \text{ and } \\
	\zeta_3(x_n,x_{n-1}) &:= \beta (D\Phi(x_n) (x_n-x_{n-1})-\Phi(x_n)+\Phi(x_{n-1})).
\end{align*}

Due to the Lipschitz continuity of $D\Phi$ and $\nabla f$, a Taylor expansion shows that for all $n \in \N$
\begin{align*}
	\|\zeta_1(x_n,x_{n-1})\| &= \|\Phi(x_{n+1})-\Phi(x_{n})-D\Phi(x_{n})(x_{n+1}-x_{n})\|\\ &\le \frac{C_\Phi}{2} \|x_{n+1}-x_{n}\|^2 
	\\
	&\le C_\Phi \gamma^2 \|\nabla f(x_n)\|^2 + C_\Phi \beta^2 \|x_{n}-x_{n-1}\|^2 \\
	& \le C_\Phi \gamma^2 L^2 d(x_n,\mathcal M)^2 + C_\Phi \beta^2 \|x_{n}-x_{n-1}\|^2 
\end{align*}
Note that in the last inequality we have used that \[\|\nabla f(x_n)\|^2=\|\nabla f(x_n)-\nabla f(x^\ast)\|^2\le L^2\|x_n-x^\ast\|^2,\] since $\|\nabla^2 f(x)\|\le L$ for all points $x$ in the segment between $x_n$ and $x^\ast$. Using Lemma~\ref{lem:Udelta} (iv) one gets 
\begin{align} \begin{split} \label{eq:28663}
		&\|x_n-x_{n-1}\|^2 = \|\Phi^{-1}(\Phi(x_n))-\Phi^{-1}(\Phi(x_{n-1}))\|^2 \\
		& \le \Bigl( \int_0^1 \|D\Phi^{-1}(t\Phi(x_n)+(1-t) \Phi(x_{n-1}))\| \, \|\Phi(x_n)-\Phi(x_{n-1})\| \, dt \Bigr)^2 \\
		&\le 4\|\tilde x_n- \tilde x_{n-1}\|^2. 
	\end{split}
\end{align}

Analogously to \eqref{eq:2374552} one has $\| \zeta_2(x_n,\tilde x_n)\| \le \gamma \frac{\delta}{4}  \|\proj_N(\tilde x_n)\|$.
Moreover, by the $C_\Phi$-Lipschitz continuity of $D\Phi$ on $\overline{B_\delta(x^\ast)}$ we can bound
\begin{align*}
	\|\zeta_3(x_n,x_{n-1})\| &\le \beta \int_0^1 \|D\Phi(t x_n + (1-t)x_{n-1})-D\Phi(x_n)\| \, \|x_n-x_{n-1}\| \, dt \\
	& \le \frac{\beta}{2} C_\Phi \|x_n-x_{n-1}\|^2 \le 2 \beta C_\Phi \|\tilde x_n-\tilde x_{n-1}\|^2,
\end{align*}
where in the last inequality we used \eqref{eq:28663}.
Together we obtain
\begin{align*}
	\|\zeta(x_n,x_{n-1}, \tilde x_n)\| \le & C_\Phi \gamma^2 L^2 \|\proj_N(\tilde x_n)\|^2 +\gamma \frac{\delta}{4}  \|\proj_N(\tilde x_n)\| \\
	&+ 2\beta C_\Phi(2\beta+1) \|\tilde x_n-\tilde x_{n-1}\|^2.
\end{align*}
As in \eqref{eq:9909} we can bound the norm of $\zeta_4(\tilde x_n):= \nabla \tilde f(\tilde x_n) - \nabla^2 \tilde f(0) (\proj_N(\tilde x_n))$ by 
$
\| \zeta_4(\tilde x_n) \|\le \frac{\delta}{2}\| \proj_N (\tilde x_n)\|
$
so that we can write
\begin{equation} \label{eq:2h44}
	\tilde x_{n+1} = \tilde x_n -\gamma \nabla^2 \tilde f(0)(\proj_N (\tilde x_n)) + \beta (\tilde x_n-\tilde x_{n-1}) + \xi_n,
\end{equation}
for $\xi_n := \zeta(x_n,x_{n-1}, \tilde x_n) + \zeta_4(\tilde x_n)$
with 
\begin{align*}\begin{split} 
		\|\xi_n\| \le &C_\Phi \gamma^2 L^2 \|\proj_N(\tilde x_n)\|^2 +(\gamma+2) \frac{\delta}{4}  \|\proj_N(\tilde x_n)\| \\
		&+ 2C_\Phi\beta (2\beta+1) \|\tilde x_n-\tilde x_{n-1}\|^2.
	\end{split}
\end{align*}
Recall that $\mathcal U_\delta \subset \overline{B_\delta(x^\ast)}$ so that using \eqref{eq:28663} with reversed roles one gets
\begin{align*}
	\|\tilde x_n-\tilde x_{n-1}\| \le 2\|x_n-x_{n-1}\| \le 4\delta \quad \text{ and }\quad  \|\proj_N(\tilde x_n)\| \le d(x_n,\mathcal M ) \le \delta\, .
\end{align*}
Therefore, there exists a constant $C_\delta>0$ that satisfies $C_\delta \to 0$ as $\delta \to 0$ such that 
\begin{align}\begin{split} \label{eq:008}
		\|\xi_n\|
		& \le C_\delta \max(\|\proj_N(\tilde x_n)\|,\|\proj_N(\tilde x_{n-1})\|,\|\proj_T(\tilde x_n-\tilde x_{n-1})\|)\, .
	\end{split}
\end{align}

Similarly to the continuous time analysis, we separate the tangential directions from the normal directions. In the normal directions, the dynamical system behaves similar to the strongly convex case considered in \cite{polyak1964some}. In the tangential directions there is no restoring force, so that we cannot derive a convergence rate with the classical approach. However, since the friction parameter $\beta$ satisfies $0 < \beta < 1$ we will see that the momentum vector $(\proj_T(x_n)-\proj_T(x_{n-1}))_{n \in \N_0}$ vanishes exponentially fast. 

In the following, we decompose the vector $x_n=:(x_n'',x_n')\in\R^d$ into its component in the normal direction $x_n'\in\R^{d_N}$ and its component in the tangential direction $x_n''\in\R^{d_T}$.
Using \eqref{eq:2h44} together with Lemma~\ref{lem:ftilde}, we can write 
\begin{align*}
	\begin{pmatrix}
		x_{n+1}' \\ x_n' \\ x_{n+1}''-x_n''
	\end{pmatrix}
	= \begin{pmatrix} x'_{n}-\gamma H ( x'_n))+\beta\left( x'_{n}-x'_{n-1}\right) \\  x'_{n} \\\ \beta (x''_{n}-x''_{n-1}) 
	\end{pmatrix}
	+
	\begin{pmatrix}
		\xi_n' \\
		0 \\  \xi_n''
	\end{pmatrix},
\end{align*}
for all $n \in \N_0$, where $H$ denotes the restriction of $\nabla^2 \tilde f(0)$ to $\{0\}^{d_T} \times \R^{d_N}$, which is a symmetric matrix with $\spec(H) \subset [\mu, L]$.
In other words,
\begin{align} \label{eq:324805723648582}
	\begin{pmatrix}
		x_{n+1}' \\ x_n' \\ x_{n+1}''-x_n''
	\end{pmatrix}
	= A
	\begin{pmatrix}
		x_{n}' \\ x_{n-1}' \\ x_{n}''-x_{n-1}''
	\end{pmatrix}
	+
	\begin{pmatrix}
		\xi_n' \\
		0 \\  \xi_n''
	\end{pmatrix},
\end{align}
where
\begin{align} \label{eq:A}
	A := \begin{pmatrix} 
		(1+\beta) \Id_{R^{d_N}}-\gamma H & -\beta \Id_{\R^{d_N}} & 0 \\
		\Id_{R^{d_N}} & 0 & 0 \\
		0 & 0 & \beta
	\end{pmatrix}.
\end{align}
Next, we compute the eigenvalues of $A$ defined in \eqref{eq:A}.
\begin{lemma} \label{lem:ev}
	Let $\beta \in (0,1)$ and $\gamma \in \bigl(\frac{2(1+\beta)}{L}\bigr)$. Then, $\rho(A) \le m(\gamma, \beta)$, where 
	\begin{align*}
		m&(\gamma, \beta) := \\
		&\begin{cases}
			\sqrt \beta , & \text{ if } \gamma \in \mathcal I_{\beta,\mu, L}, \\
			\frac{1+\beta-\gamma \mu}{2}+\sqrt{\left(\frac{1+\beta-\gamma \mu}{2}\right)^{2}-\beta}, & \text{ if }  \gamma \in \Bigl(0, \frac{2(1+\beta)}{L+\mu} \Bigr] \setminus \mathcal I_{\beta,\mu, L}, \\
			\frac{\gamma L-(1+\beta)}{2}+\sqrt{\left(\frac{\gamma L-(1+\beta)}{2}\right)^{2}-\beta}, & \text{ if } \gamma \in \Bigl( \frac{2(1+\beta)}{L+\mu} , \frac{2(1+\beta)}{L} \Bigr) \setminus \mathcal I_{\beta,\mu, L},\\
		\end{cases}
	\end{align*}  
	with $\mathcal I_{\beta,\mu, L}:= \Big[\frac{(1-\sqrt{\beta})^2}{\mu}, \frac{(1+\sqrt{\beta})^2}{L}\Big]$.
	Moreover, one has $m(\gamma, \beta)<1$ and $m(\gamma, \beta)$ is minimized by choosing
	\begin{align*}
		\gamma=\frac{4}{(\sqrt{\mu}+\sqrt{L})^{2}} \quad \text { and } \quad \beta= \Bigl( \frac{\sqrt \kappa-1}{\sqrt{\kappa}+1}\Bigr)^2
	\end{align*}
	for which $m(\gamma, \beta) = \frac{\sqrt{\kappa}-1}{\sqrt{\kappa}+1}$.
\end{lemma}

\begin{proof}
	First note that for $i=2 d_N+1, \dots, d_N+d$ the $i$-th vector in the standard basis of $\R^{d_N+d}$, 
	denoted by $e_i$, is an eigenvector of $A$ with $A e_i = \beta e_i$. Thus, $\beta$ is an eigenvalue of $A$ and we can restrict our attention to computing the spectrum of 
	\begin{align*}
		\tilde A :=     \begin{pmatrix} 
			(1+\beta) \Id_{R^{d_N}}-\gamma H & -\beta \Id_{\R^{d_N}}  \\
			\Id_{R^{d_N}} & 0  
		\end{pmatrix}.
	\end{align*}
	Observe that $T$ is self-similar to a block diagonal matrix
	\begin{align*}
		\widehat{A}=\left(\begin{array}{ccc}
			A_{1} & & \\
			& \ddots & \\
			& & A_{d}
		\end{array}\right)
	\end{align*}
	with blocks
	\begin{align*}
		A_{i}=\left(\begin{array}{cc}
			1+\beta-\gamma \lambda_{i} & -\beta \\
			1 & 0
		\end{array}\right) \in \mathbb{R}^{2 \times 2},
	\end{align*}
	where $\lambda_1, \dots, \lambda_N$ denote the eigenvalues of $H$. Therefore, it suffices to compute the eigenvalues of $A_i$ for all $i =1, \dots, d_N$. A straightforward computation shows that the eigenvalues of $A_i$ are given by
	\begin{align*}
		& \eta_{i}^{(1)}=\frac{1+\beta-\gamma \lambda_{i}}{2}-\sqrt{\left(\frac{1+\beta-\gamma \lambda_{i}}{2}\right)^{2}-\beta} \\
		& \eta_{i}^{(2)}=\frac{1+\beta-\gamma \lambda_{i}}{2}+\sqrt{\left(\frac{1+\beta-\gamma \lambda_{i}}{2}\right)^{2}-\beta}.
	\end{align*}
	Using $\lambda_1, \dots, \lambda_N \in [\mu, L]$ we get for $\gamma \le \frac{2(1+\beta)}{L+\mu}$ that
	\begin{align*}
		\rho(A) \le \rho\Bigl(\left(\begin{array}{cc}
			1+\beta-\gamma \mu & -\beta \\
			1 & 0
		\end{array}\right)\Bigr)
	\end{align*}
	and for $\gamma \ge \frac{2(1+\beta)}{L+\mu}$ that
	\begin{align*}
		\rho(A) \le \rho\Bigl(\left(\begin{array}{cc}
			1+\beta-\gamma L & -\beta \\
			1 & 0
		\end{array}\right)\Bigr).
	\end{align*}
	Now, for $i \in \{1, \dots, d_N\}$ with $\left(\frac{1+\beta-\gamma \lambda_{i}}{2}\right)^{2}-\beta \leq 0$ one has $|\eta_{i}^{(1)}|=|\eta_{i}^{(2)}|=\sqrt{\beta}$. Thus, for the choice 
	\begin{align*}
		\frac{(1-\sqrt{\beta})^2}{\mu} \le \gamma \le \frac{(1+\sqrt{\beta})^2}{L}
	\end{align*}
	one has $\rho(A)= \sqrt{\beta}$. 
	In conclusion,
	$
	\rho(A) \le m(\gamma, \beta)
	$.
	The last statement is a straight-forward computation. 
\end{proof}

\subsection{Local convergence} 
In this section, we prove local convergence of the heavy ball method for all pairs of hyperparameters $\beta \in (0,1)$ and $\gamma\in (0,\frac{2(1+\beta)}{L})$ conditioned on staying in the set $\mathcal U_\delta$. For this, we use the representation \eqref{eq:324805723648582} and derive an a-priori rate of convergence in terms of the spectral radius of $A$ defined in \eqref{eq:A}.
\begin{lemma} \label{lem:aux}
	For every $\eps >0 $ there exist  $\delta, C >0 $ such that for all $n \in \N_0$ with $(x_i)_{i=0, \dots, n}\subset \mathcal U_\delta$ one has
		\begin{align*}
			(\rho(A)+\frac{7}{9}\eps)^{-n} \, \Bigl\| \begin{pmatrix}
				x_{n+1}' \\ x_n' \\ x_{n+1}''-x_n''
			\end{pmatrix} \Bigr\|  \le C,
		\end{align*}
	where $\rho(A):=\sup\{|\lambda| : \lambda \in  \spec(A)\}$  denotes the spectral radius of the matrix $A$.     
\end{lemma}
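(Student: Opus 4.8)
The plan is to combine the explicit recursion \eqref{eq:2h44}, rewritten as the affine iteration $y_{n+1} = A y_n + \xi_n$ with $y_n := (x_n', x_{n-1}', x_n''-x_{n-1}'')^\top$ and error term $\xi_n$ controlled by \eqref{eq:008}, together with the standard fact that for any matrix $A$ and any $\eps>0$ there is a constant $C_\eps$ with $\|A^k\| \le C_\eps (\rho(A)+\eps/2)^k$ for all $k\in\N_0$ (this is the discrete-time analogue of \cite[Lemma~7]{polyak1964some}). Writing the solution via Duhamel's formula $y_n = A^n y_0 + \sum_{k=0}^{n-1} A^{n-1-k}\xi_k$, and noting from \eqref{eq:008} that $\|\xi_k\| \le C_\delta \max(\|y_k\|,\|y_{k-1}\|)$ — since $\|\proj_N(\tilde x_k)\| = \|x_k'\|$, $\|\proj_N(\tilde x_{k-1})\| = \|x_{k-1}'\|$ and $\|\proj_T(\tilde x_k - \tilde x_{k-1})\| = \|x_k''-x_{k-1}''\|$ are all bounded by $\|y_k\|$ or $\|y_{k-1}\|$ — one obtains the discrete Gronwall-type inequality
\begin{align*}
g(n) := (\rho(A)+\tfrac{\eps}{2})^{-n}\|y_n\| \le C_\eps \|y_0\| + C_\eps C_\delta \sum_{k=0}^{n-1} (\rho(A)+\tfrac{\eps}{2})^{-1} \max(g(k),g(k-1)).
\end{align*}

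Next I would apply the discrete Gronwall lemma to deduce $g(n) \le C_\eps \|y_0\| \prod_{k=0}^{n-1}(1 + C'_\delta)$ for a constant $C'_\delta = C_\eps C_\delta (\rho(A)+\eps/2)^{-1}$, i.e. $\|y_n\| \le C_\eps\|y_0\| (\rho(A)+\eps/2)^n (1+C'_\delta)^n$. The point is that $C_\delta \to 0$ as $\delta\to 0$ (and $C_\eps$ does not depend on $\delta$), so by choosing $\delta$ small enough we can guarantee $(\rho(A)+\eps/2)(1+C'_\delta) \le \rho(A)+\eps$, which yields $\|y_n\| \le C_\eps\|y_0\|(\rho(A)+\eps)^n$ and hence $\limsup_{n\to\infty}(\rho(A)+\eps)^{-n}\|y_n\| \le C_\eps\|y_0\| < \infty$. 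To upgrade the $\limsup$-boundedness to $\limsup = 0$, I would run the argument twice: first obtain boundedness with rate $\rho(A)+\eps/2$, then re-insert this bound into the Duhamel formula; since $\sum_k (\rho(A)+\eps/2)^{-(n-1-k)}\cdot\|\xi_k\|$ with $\|\xi_k\|\lesssim (\rho(A)+\eps/2)^k$ produces a convergent geometric tail once divided by $(\rho(A)+\eps)^n$, the stronger statement follows. (Alternatively, one simply notes $\limsup_n (\rho(A)+\eps)^{-n}\|y_n\| \le \lim_n (\rho(A)+\eps/2)^n/(\rho(A)+\eps)^n \cdot C_{\eps/2}\|y_0\| = 0$.)

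There is one subtlety I would need to check carefully: the Duhamel estimate and the Gronwall step require that the iteration \eqref{eq:2h44} genuinely holds for \emph{all} $n\in\N_0$, which is exactly the hypothesis $(x_n)_{n\in\N_0}\subset U_\delta$, and that the bound \eqref{eq:008} is uniform in $n$ — both are already secured by the preceding lemmas. I would also make sure the block structure of $A$ in \eqref{eq:A} is respected: the third block acts independently as scalar multiplication by $\beta$ on the momentum coordinate $x_n''-x_{n-1}''$, so $\rho(A) = \max(\beta, \rho(\tilde A))$ where $\tilde A$ is the $2d_N\times 2d_N$ upper-left block governing the normal dynamics; but for this lemma only $\rho(A)$ itself enters, so the precise identification of $\rho(A)$ with $m(\gamma,\beta)$ is deferred to a later step.

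The main obstacle is purely the discrete Gronwall bookkeeping: unlike the continuous-time proof where Gronwall's inequality gives a clean exponential factor $e^{C_\eta\delta(t-t_\delta)}$, here the $\max(g(k),g(k-1))$ coupling (forced by the fact that $\|\xi_n\|$ depends on \emph{both} $x_n'$ and $x_{n-1}'$ through the momentum term) means I must set up the recursion on the vector $(g(n),g(n-1))$ or use a two-step induction, and verify that the resulting geometric growth constant still tends to $1$ as $\delta\to 0$. Once that is in place, matching the growth rate to $\rho(A)+\eps$ by shrinking $\delta$ is routine, and the convergence of $(x_n)$ to some $x_\infty\in\mathcal M$ — needed to even speak of $m(\gamma,\beta)$ as the rate around $x_\infty$ — will follow because $\|x_{n+1}-x_n\| \le 2\|\tilde x_{n+1}-\tilde x_n\| \lesssim \|y_{n+1}\| + \|y_n\|$ is summable, so the telescoping argument used in the continuous-time final step (Section~\ref{sec:final}) applies verbatim.
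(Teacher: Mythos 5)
Your proposal is correct and follows essentially the same route as the paper: rewrite \eqref{eq:2h44} as the affine iteration $Y_n = A Y_{n-1} + \Xi_n$, bound $\|A^k\|\le C_\eta\,\eta^k$ for $\eta>\rho(A)$ via Gelfand's formula, apply a discrete Gronwall argument to the weighted norms, and shrink $\delta$ (using $C_\delta\to0$) so the perturbed growth rate stays strictly below $\rho(A)+\eps$, which forces the $\limsup$ to vanish exactly as in your parenthetical remark. The ``main obstacle'' you flag is not actually present: since the stacked state vector $(x_n',x_{n-1}',x_n''-x_{n-1}'')$ already contains all three quantities appearing in \eqref{eq:008}, one has $\|\xi_n\|\le C_\delta\|Y_{n-1}\|$ with a \emph{single} state norm, so the plain one-step Gronwall inequality used in the paper closes without any two-step $\max(g(k),g(k-1))$ bookkeeping.
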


\begin{proof}
	Let $n \in \N_0$ with $(x_i)_{i=0, \dots, n}\subset \mathcal U_\delta$. Then, by the variation of constant formula, one has
	\begin{align} \label{eq:28664}
		\begin{pmatrix}
			x_{n+1}' \\ x_n' \\ x_{n+1}''-x_n''
		\end{pmatrix}
		= A^n
		\begin{pmatrix}
			x_{1}' \\ x_{0}' \\ x_{1}''-x_{0}''
		\end{pmatrix}
		+\sum_{i=1}^{n}
		A^{n-i}
		\begin{pmatrix}
			\xi_i' \\
			0 \\  \xi_i''
		\end{pmatrix}.
	\end{align}
	Let $\eta > \rho(A)$. By Gelfand's formula \cite{gelfand} (see also \cite{foucart2018matrix}), there exists a constant $C_\eta>0$ such that
	\begin{align}  \label{eq:009}
		\sup_{n \in \N_0} \eta^{-n} \|A^n\| \le C_\eta.
	\end{align}
	Define $(g(n))_{n \in \N_0}$ via $g(n):= \eta^{-n} \Bigl\| \begin{pmatrix}
		x_{n+1}' \\ x_n' \\ x_{n+1}''-x_n''
	\end{pmatrix} \Bigr\|$. Combining \eqref{eq:28664}, \eqref{eq:009} and \eqref{eq:008}, we achieve
	\begin{align*}
		g(n) &\le C_\eta \Bigl\| \begin{pmatrix}
			x_{1}' \\ x_0' \\ x_{1}''-x_0''
		\end{pmatrix} \Bigr\| + C_\eta \sum_{i=1}^{n-1} \eta^{-i} \Bigl\| \begin{pmatrix}
			\xi_i' \\
			0 \\  \xi_i''
		\end{pmatrix} \Bigr\| \\
		& \le  C_\eta g(0) + C_\eta C_\delta \sum_{i=1}^{n-1} g(i),
	\end{align*}
	so that $g(n)\le C_\eta g(0) (1+C_\eta C_\delta )^n$ by Gronwall's inequality in discrete time. Thus, we have verified that
	\begin{align} \label{eq:22221445}
		\Bigl\| \begin{pmatrix}
			x_{n+1}' \\ x_n' \\ x_{n+1}''-x_n''
		\end{pmatrix} \Bigr\| \le C_\eta  \Bigl\| \begin{pmatrix}
			x_{1}' \\ x_0' \\ x_{1}''-x_0''
		\end{pmatrix} \Bigr\| (1+C_\eta C_\delta)^n\eta^n.
	\end{align}
	Now, let $0< \eps < 1$ be arbitrary. Then, \eqref{eq:22221445} for the choice $\eta= \rho(A)+\frac{\eps}{3}$ and $C_\delta = \frac{\eps}{3 C_\eta \max(1,\rho(A)) }$ gives 
	\begin{align*}
		\Bigl\| \begin{pmatrix}
			x_{n+1}' \\ x_n' \\ x_{n+1}''-x_n''
		\end{pmatrix} \Bigr\| &\le C_\eta  \Bigl\| \begin{pmatrix}
			x_{1}' \\ x_0' \\ x_{1}''-x_0''
		\end{pmatrix} \Bigr\| \,  \Bigl(\Bigl(1+\frac{\eps}{3 \max(1,\rho(A))}\Bigr)  \Bigl(\rho(A)+\frac{\eps}{3}\Bigr)\Bigr)^{n} \\
		&  \le C_\eta  \Bigl\| \begin{pmatrix}
			x_{1}' \\ x_0' \\ x_{1}''-x_0''
		\end{pmatrix} \Bigr\| \, \Bigl(\rho(A)+\frac{7}{9}\eps\Bigr)^{n}\,.
	\end{align*}
	This implies that
	\begin{align*}
		(\rho(A)+\frac{7}{9}\eps)^{-n} \Bigl\| \begin{pmatrix}
			x_{n+1}' \\ x_n' \\ x_{n+1}''-x_n''
		\end{pmatrix} \Bigr\|  \le C_\eta  \Bigl\| \begin{pmatrix}
			x_{1}' \\ x_0' \\ x_{1}''-x_0''
		\end{pmatrix} \Bigr\|.
	\end{align*}
\end{proof}

We are now able to prove convergence of Polyak's heavy ball method assuming that $(x_n)_{n \in \N_0} \subset \mathcal U_\delta$ for a sufficiently small $\delta>0$.
\begin{lemma} \label{lem:conv}
	Assume that $\rho(A)<1$. Then there exists a $\delta>0$ such that if $(x_n)_{n \in \N_0} \subset \mathcal U_\delta$, then the sequence $(x_n)_{n\in\N_0}$
	converges to a local minimum $x_\infty \in \mathcal U$, where $\mathcal U$ denotes the set that is given by Lemma~\ref{lem:chart}.
	Moreover, for every $\eps>0$ there exists a $\delta>0$ such that if $(x_n)_{n \in \N_0} \subset \mathcal U_\delta$ one has
	\begin{align*}
		\limsup_{n \to \infty}  (\rho(A)+\eps)^{-n} \|x_n-x_\infty\| =0\,.
	\end{align*}
\end{lemma}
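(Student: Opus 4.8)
The plan is to bootstrap everything from Lemma~\ref{lem:aux}, which already delivers exponential decay of the stacked vector $(x_{n+1}',x_n',x_{n+1}''-x_n'')$ at any rate $\rho(A)+\eps'$ we like, at the price of shrinking the neighborhood. Since $\rho(A)<1$, given the target $\eps>0$ I would first fix $\eps'\in(0,\eps)$ small enough that $\rho(A)+\eps'<1$, and let $\delta>0$ be the value produced by Lemma~\ref{lem:aux} for this $\eps'$; from now on assume $(x_n)_{n\in\N_0}\subset U_\delta$. Unwinding $\limsup_{n}(\rho(A)+\eps')^{-n}\|(x_{n+1}',x_n',x_{n+1}''-x_n'')\|=0$ into a pointwise bound, there is a constant $C\ge0$ with
\begin{align*}
\|x_n'\|\le C(\rho(A)+\eps')^n\qquad\text{and}\qquad\|x_{n+1}''-x_n''\|\le C(\rho(A)+\eps')^n\quad\text{for all }n\in\N_0.
\end{align*}

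Next I would prove convergence. The point is that there is no restoring force in the tangential directions, so $x_n''$ need not contract; convergence instead comes from summability of its increments. Because $\rho(A)+\eps'<1$, the second bound gives $\sum_{n\ge0}\|x_{n+1}''-x_n''\|<\infty$, so $(x_n'')_n$ is a Cauchy sequence in $\R^{d_T}$ and converges to some $x_\infty''$, while $x_n'\to0$. Hence the chart iterates $\tilde x_n=(x_n'',x_n')$ converge to $\tilde x_\infty:=(x_\infty'',0)$. To legitimize passing back through $\Phi^{-1}$, note that $(x_n)\subset U_\delta\subset\overline{B_\delta(x^\ast)}\subset U$, so $(\tilde x_n)$ lies in the compact --- hence closed --- set $\Phi(\overline{B_\delta(x^\ast)})\subset\Phi(U)$; therefore $\tilde x_\infty\in\Phi(U)$, and $x_\infty:=\Phi^{-1}(\tilde x_\infty)\in\overline{B_\delta(x^\ast)}$ satisfies $x_n=\Phi^{-1}(\tilde x_n)\to x_\infty$ by continuity of $\Phi^{-1}$. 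Since the normal component of $\tilde x_\infty$ vanishes, Lemma~\ref{lem:chart}(a) yields $\tilde x_\infty\in\Phi(\mathcal M\cap U)$, i.e.\ $x_\infty\in\mathcal M$; under \eqref{eq:PL} this is the set of global minimizers.

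For the rate I would telescope the increment bound: for every $n$,
\begin{align*}
\|x_n''-x_\infty''\|=\lim_{m\to\infty}\|x_n''-x_m''\|\le\sum_{k\ge n}\|x_{k+1}''-x_k''\|\le\frac{C}{1-(\rho(A)+\eps')}\,(\rho(A)+\eps')^n,
\end{align*}
so together with $\|x_n'\|\le C(\rho(A)+\eps')^n$ we get $\|\tilde x_n-\tilde x_\infty\|\le\|x_n''-x_\infty''\|+\|x_n'\|\le C'(\rho(A)+\eps')^n$. Transporting this back to the original coordinates via the Lipschitz bound $\|D\Phi^{-1}\|\le2$ from Lemma~\ref{lem:Udelta}(iv) (exactly as in \eqref{eq:28663}) gives $d(x_n,x_\infty)=\|x_n-x_\infty\|\le2C'(\rho(A)+\eps')^n$, whence $\limsup_n(\rho(A)+\eps)^{-n}d(x_n,x_\infty)\le2C'\limsup_n\bigl((\rho(A)+\eps')/(\rho(A)+\eps)\bigr)^n=0$ because $\eps'<\eps$. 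Running the same argument with any fixed admissible $\eps'$ (ignoring the role of $\eps$) also gives the first, unconditional, convergence claim.

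The hard part here is conceptual rather than computational: Lemma~\ref{lem:aux} only controls the tangential \emph{increments}, so both the existence of the limit $x_\infty$ and its convergence rate must be squeezed out of a geometric sum of these increments, and this is exactly where the standing hypothesis $\rho(A)<1$ enters. Everything else is routine bookkeeping --- the compactness argument guaranteeing that $\tilde x_\infty$ stays inside the chart domain, and the transfer of estimates between the chart coordinates $\tilde x$ and the ambient coordinates $x$ using the uniform bounds on $D\Phi^{\pm1}$ recorded in Lemma~\ref{lem:Udelta}(iv).
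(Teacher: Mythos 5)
Your proposal is correct and follows essentially the same route as the paper: it leverages the exponential bound from Lemma~\ref{lem:aux}, deduces convergence from summability of the increments (the tangential part having no restoring force), identifies $x_\infty\in\mathcal M$ via the chart, and obtains the rate from the geometric tail sum pulled back through $\|D\Phi^{-1}\|\le 2$. The only cosmetic difference is that you convert the $\limsup$ statement of Lemma~\ref{lem:aux} into a trajectory-dependent pointwise bound and telescope componentwise, whereas the paper reuses the quantitative estimate \eqref{eq:22221445} from that lemma's proof and sums the increments of $\tilde x_n$ directly; both are equivalent for the asymptotic claim.
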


\begin{proof}
	Note that, by definition, $\tilde x_n = (x_n'',x_n')$ for all $n \in \N_0$. Therefore, we can bound the increments of $(\tilde x_n)_{n\in\N_0}$ by
	\begin{align} \label{eq:23372987r9297}
		\| \tilde x_{n+1}-\tilde x_{n} \|^2 \le 2 \,  \Bigl\| \begin{pmatrix}
			x_{n+1}' \\ x_n' \\ x_{n+1}''-x_n''
		\end{pmatrix} \Bigr\|^2.
	\end{align}
	Let $0< \eps < 1-\rho(A)$. Using Lemma~\ref{lem:aux}, there exists a $\delta>0$ such that $(x_n)_{n \in \N_0} \subset \mathcal U_\delta$ implies 
	\begin{align*}
		\sum_{i=n}^\infty \| \tilde x_{i+1}-\tilde x_{i} \|&\le \sqrt 2 \,  \sum_{i=n}^\infty \Bigl\| \begin{pmatrix}
			x_{i+1}' \\ x_i' \\ x_{i+1}''-x_i''
		\end{pmatrix} \Bigr\|
		\\
		&\le C_\eta  \Bigl\| \begin{pmatrix}
			x_{1}' \\ x_0' \\ x_{1}''-x_0''
		\end{pmatrix} \Bigr\| \, \sum_{i=n}^\infty \Bigl( \rho(A)+\frac{7}{9}\eps\Bigr)^{i} \to 0\,,
	\end{align*}
	as $n\to\infty$. Hence, $(x_n)_{n \in \N_0}$ converges to a point $x_\infty \in \overline{\mathcal U_\delta} \subset \mathcal U$. Since $x_n' \to 0$ and $\Phi^{-1}(\Phi(\mathcal U_\delta) \cap (\R^{d_T} \times \{0\}^{d_N} ))\subset \mathcal M$, $x_\infty$ is a local minimum. Using $\|x_n-x_\infty\| \le 2\|\tilde x_n-\Phi(x_\infty)\|$, see \eqref{eq:28663}, we get
	\begin{align*}
		\limsup_{n \to \infty}  (\rho(A)+\eps)^{-n} \|x_n-x_\infty\|  &\le \limsup_{n \to \infty} 2 (\rho(A)+\eps)^{-n} \|\tilde x_n-\Phi(x_\infty)\|
		\\
		& \le \limsup_{n \to \infty} 2 (\rho(A)+\eps)^{-n} \sum_{i=n}^\infty\| \tilde x_{i+1}-\tilde x_{i} \| =0.
	\end{align*}
\end{proof}

\subsection{Local attraction} Next, we prove that, when initialized sufficiently close to $x^\ast$, $(x_n)_{n \in \N_0}$ converges to a local minimum in $\mathcal U$, where $\mathcal U$ denotes the set that is given by Lemma~\ref{lem:chart}. It is important to emphasize that, in contrast to the statement of Lemma~\ref{lem:conv}, the following result does not require the additional assumption that the sequence $(x_n)_{n \in \N_0}$ remains in a neighborhood of $x^\ast$. Instead, we show that the iterates are asymptotically attracted to the manifold $\mathcal M$ and slow down sufficiently fast to prevent them from escaping this neighborhood.

\begin{lemma} \label{lem:conv2}
	Let $\beta \in (0,1)$ and $\gamma \in \bigl(\frac{2(1+\beta)}{L}\bigr)$.
	For a $(\mu,L)$-regular point $x^\ast$ there exists a neighborhood $V$ of $x^\ast$ such that if $x_0,x_1 \in V$, then $(x_n)_{n\in\N_0}$ converges to a local minimum $x_\infty \in \mathcal U$, where $\mathcal U$ denotes the set that is given by Lemma~\ref{lem:chart}.
\end{lemma}

\begin{proof}
	Since $\rho(A)\le m(\gamma, \beta)<1$, see Lemma~\ref{lem:ev}, we can use \eqref{eq:23372987r9297} together with Lemma~\ref{lem:aux} to conclude that there exist a $\delta>0$ and a vanishing sequence $(C(n))_{n \in \N_0}$ such that for all $0 \le N \le n$
	\begin{align} \label{eq:6446}
		(x_i)_{0 \le i \le n} \subset \mathcal U_\delta \Rightarrow \|x_N-x_n\| \le C(N)\,.
	\end{align} 
	Choose $r_1>0$ with $B_{r_1}(x^\ast) \subset \mathcal U_\delta$. Moreover, choose 
	$r_2\in(0,r_1)$ such that $\|\nabla f(x)\| \le \frac{r_1}{8\gamma}$ for all $x \in B_{r_2}(x^\ast)$ and choose sufficiently large $N \in \N_0$ such that $C(n)< \frac{r_2}{8}$ for all $n \ge N$. 
	For all $n \in \N_0$, one has
	\begin{align*}
		\|x_{n+1}-x_n\| \le \gamma \|\nabla f(x_n)\| +\beta \|x_n-x_{n-1}\|
	\end{align*}
	and, using the Lipschitz continuity of $\nabla f$,
	\begin{align*}
		\|\nabla f(x_{n+1})\| &\le \|\nabla f(x_n)\| + L \|x_{n+1}-x_n\| \\
		&\le \|\nabla f(x_n)\| + L \gamma \|\nabla f(x_n)\| + L \beta \|x_n-x_{n-1}\|\,,
	\end{align*}
	which gives 
	\begin{align*}
		\|x_{n+1}-x_n\|+\|\nabla f(x_{n+1})\| \le (1+\gamma+L\gamma) \|\nabla f(x_n)\| + \beta(1+L) \|x_n-x_{n-1}\|\,.
	\end{align*}
	Repeating this inequality, one gets for $p=\max(1+\gamma+L\gamma, \beta(1+L))$ that
	\begin{align*}
		\|x_{n+1}-x_n\| + \|\nabla f(x_n)\| \le p^{n} (\|x_{1}-x_0\| + \|\nabla f(x_1)\|)\,.
	\end{align*}
	Thus, there exists
	$r\in(0,\sfrac{r_2}{4})$ such that $x_0,x_{1} \in B_r(x^\ast)=: V$ ensures 
	\begin{align*}
		\|x_{n+1}-x_n\| + \|\nabla f(x_n)\| \le \frac{r_2}{4 N} \quad \text{ for all }  n \le N \,,
	\end{align*}
	implying
	\begin{align} \begin{split} \label{eq:777}
			\|x_n-x^\ast\| &\le \|x_1-x^\ast\| + \sum_{i=2}^n \|x_i-x_{i-1}\| \\
			&\le \|x_1-x^\ast\| + \frac{r_2}{4} \le \frac{r_2}{2} \quad \text{ for all }  n \le N\,.
		\end{split}
	\end{align}
	By induction, we show that $x_0,x_1 \in B_r(x^\ast)$ implies
	\begin{align*}
		x_n \in B_{\frac{3}{4} r_2}(x^\ast) \subset \mathcal U_\delta \quad \text{ for all } n \in \N_0\,.
	\end{align*}
	For all $n \le  N$ this statement holds due to \eqref{eq:777}. Assume that the statement holds for all $n=0, \dots, N_1$ with $N_1 \ge N$. Then
	\begin{align*}
		\|x_{N_1+1}-x^\ast\| &\le \|x_{N_1+1}-x_{N_1}\| + \|x_{N_1} -x^\ast\| \le \|x_{N_1+1}-x_{N_1}\| + \frac{3}{4} r_2 \\
		& \le \gamma \|\nabla f(x_{N_1})\| + \beta \|x_{N_1}-x_{N_1-1}\| + \frac{3}{4} r_2
	\end{align*}
	Since $(x_n)_{n=0, \dots, N_1} \subset B_{r_2}(x^\ast)$ one has $\|\nabla f(x_{N_1})\| \le \frac{r_1}{8\gamma}$ and $\|x_{N_1}-x_{N_1-1}\|\le C(N_1) < \frac{r_2}{8}$ so that $\|x_{N_1+1}-x^\ast\|\le r_1$. Thus, $x_{N_1+1} \in B_{r_1}(x^\ast)\subset \mathcal U_\delta$ and, by \eqref{eq:6446}, one has $\|x_{N_1+1}-x_N\|\le \frac{r_2}{8}$. Together with $x_N \in B_{\frac 12 r_2}(x^\ast)$ we found $x_{n+1} \in B_{\frac 34 r_2}(x^\ast).$
	
	Finally, we are ready to apply Lemma~\ref{lem:conv}, to prove convergence of $(x_n)_{n \in \N_0}$ 
	towards a local minimum $x_\infty \in \mathcal U$. 
\end{proof}

\subsection{Final step: Local accelerated convergence}
We are now ready to prove the main result stated in Theorem~\ref{theo2}.
\begin{proof}[Proof of Theorem~\ref{theo2}]
	By Lemma~\ref{lem:conv2}, there exists a neighborhood $V$ of $x^\ast$ such that $x_0,x_1 \in V$ implies convergence of $(x_n)_{n \in \N_0}$ to a local minimum $x_\infty \in \mathcal U$, where $\mathcal U$ denotes the set that is given by Lemma~\ref{lem:chart}. Now, we can replace $x^\ast$ by $x_\infty$ in Lemma~\ref{lem:conv} and get the optimal rate of convergence. 
	In particular, for $\delta >0$ we denote by $\mathcal U_\delta(x_\infty)$ a neighborhood of $x_\infty$ satisfying the conclusion of Lemma~\ref{lem:conv} for $x^\ast$ replaced by $x_\infty$, which is also a $(\mu,L)$-regular point. Since $x_n \to x_\infty$, for every $\delta>0$ there exists an $N_\delta>0$ with $x_n \in \mathcal U_\delta(x_\infty)$ for all $n \ge N_\delta$. Thus, Lemma~\ref{lem:conv} implies that for all $\eps >0$
	\begin{align} \label{eq:as1}
		\lim_{n \to \infty} (m(\gamma, \beta)+\eps)^{-n}  \|x_n-x_\infty\| =0,
	\end{align}
	which implies the first assertion of Theorem~\ref{theo2}.
	
	For the second assertion note that, analogously to \eqref{eq:993725}, \eqref{eq:as1} immediately implies that
	\begin{align*}
		\limsup_{n \to \infty} \, (m(\gamma,\beta)+\eps)^{-2n} f(x_n)  =0.
	\end{align*}
\end{proof}

\section{Numerical experiment} \label{sec:experiments}
	In the following section, we illustrate our theoretical findings through a numerical toy example. This example shows that Polyak’s heavy ball method in discrete time exhibits local acceleration, with a local convergence rate that matches the rate predicted by 
	Theorem~\ref{theo2}. Moreover, it highlights that the convergence behavior is governed by the PL constant in the neighborhood of the limit point.
	
	Consider the function $g:\R\to\R$ given by $g(z) = z^2 + 3 \sin^2(z)$, which is non-convex but satisfies a (global) PL-inequality and has a unique global minimum at $z^\ast=0$; see also \cite{karimi2016linear}. 
	The global PL constant is given by
	$$
	\mu_{\mathrm{global}}:= \inf_{x \in \R} \mu(x) \approx 0.176 ,
	$$
	where $\mu(x) := \frac{1}{2}\frac{g'(x)^2}{g(x)}$. Since $\lim_{z\to 0} \mu(z) = g''(0) = 8$, a much stronger PL-inequality is satisfied around the global minimum $0$. In fact, by continuity, for any $\varepsilon>0$ there exists $\delta>0$ such that $g$ satisfies a PL-inequality with constant $8-\varepsilon$ on $(-\delta,\delta)$. 
	
	In our numerical experiment, we consider the heavy ball method \eqref{eq:HBdiscreteintro} applied to the two-dimensional objective function $f(x) = g(\psi(x_1,x_2))$ where $\psi(x_1,x_2):=x_2-\phi(x_1)$, $\phi(x_1):=0.7\, \sin(x_1)$, $x=(x_1,x_2)\in\R^2$. The set of minima is described by the graph of $\phi$, i.e.~$\cM=\{(x,\phi(x)): x\in\R\}\subset\R^2$. The gradient of $f$ satisfies
	\[ \|\nabla f(x)\|^2 = (1+(\phi')^2(x_1)) |g'(\psi(x_1,x_2))|^2\,.\]
	Since $g$ satisfies the PL-inequality with constant $\mu_{\mathrm{global}}$, we deduce a PL condition for $f$
	\[
	\|\nabla f(x)\|^2 \ge 2\mu_{\mathrm{global}}(1+ (\phi')^2(x_1)) f(x) \ge 2 \mu_{\mathrm{global}}f(x) 
	\]
	for all $x=(x_1,x_2)\in\R^2$. Moreover, for any $\varepsilon>0$ there exists $\delta>0$ such that for all $x \in V_\delta := \{(x_1,x_2)\in\R^2:|\psi(x_1,x_2)|=|x_2-\phi(x_1)|<\delta,\ x_1\in\R\}$ one has
	\[\frac12 \frac{\|\nabla f(x)\|^2 }{f(x)} \ge \frac{1}{2}\frac{(g')^2(\psi(x_1,x_2))}{g(\psi(x_1,x_2))} \ge 8-\varepsilon\,.\] 
	More precisely, there exists an open neighborhood $V_\delta$ of $\cM$ such that the PL-inequality holds with constant $\hat\mu_\delta = 8-\varepsilon$. Regarding the smoothness of $f$ we compute the Hessian as
	\[ \nabla^2 f(x) = g''(\psi(x_1,x_2)) \begin{pmatrix} -\phi'(x_1) \\ 1 \end{pmatrix} \begin{pmatrix} -\phi'(x_1) & 1 \end{pmatrix} + g'(\psi(x_1,x_2)) \begin{pmatrix}
		-\phi''(x_1) & 0 \\ 0 & 0
	\end{pmatrix}\]
	giving the bound
	\[
	\|\nabla^2 f(x)\| \le 8(0.7^2 + 1) + 0.7\, |g'(\psi(x_1,x_2)|,
	\]
	where we have used that $|g''(y)|\le 8$ for all $y \in \R$.
	In our implementation, following Theorem~\ref{theo2} we choose 
	$$
	\gamma=\frac{4}{\big(\sqrt{\hat \mu} + \sqrt{\hat L}\big)^2} \quad \text{ and } \quad \beta=\Big(\frac{\sqrt{\kappa}-1}{\sqrt{\kappa}+1}\Big)^2
	$$
	for different $\hat\mu\in\{\frac1{32}, 1, 4, 7.5\}$ including a global PL constant $\frac 1{32}< \mu_{\mathrm{global}}$ and approaching the local PL constant at $\mathcal M$. We initialize the method in $x_1(0) = 10$, $x_2(0) = \phi(x_1) + R$ for $R=5$ such that $|\psi(x_1(0),x_2(0))|\le R$. Provided that the iterates remain in $\{(x_1,x_2)\in \R^2 : |\psi(x_1,x_2)|\le 2R\}$ we can estimate the local smoothness constant $\hat L \le 8(0.7^2 + 1) + 2\cdot 0.7\cdot R$ which is used for our implementation.
	
	\begin{figure}[!htb]
		\centering \includegraphics[width=0.5\textwidth]{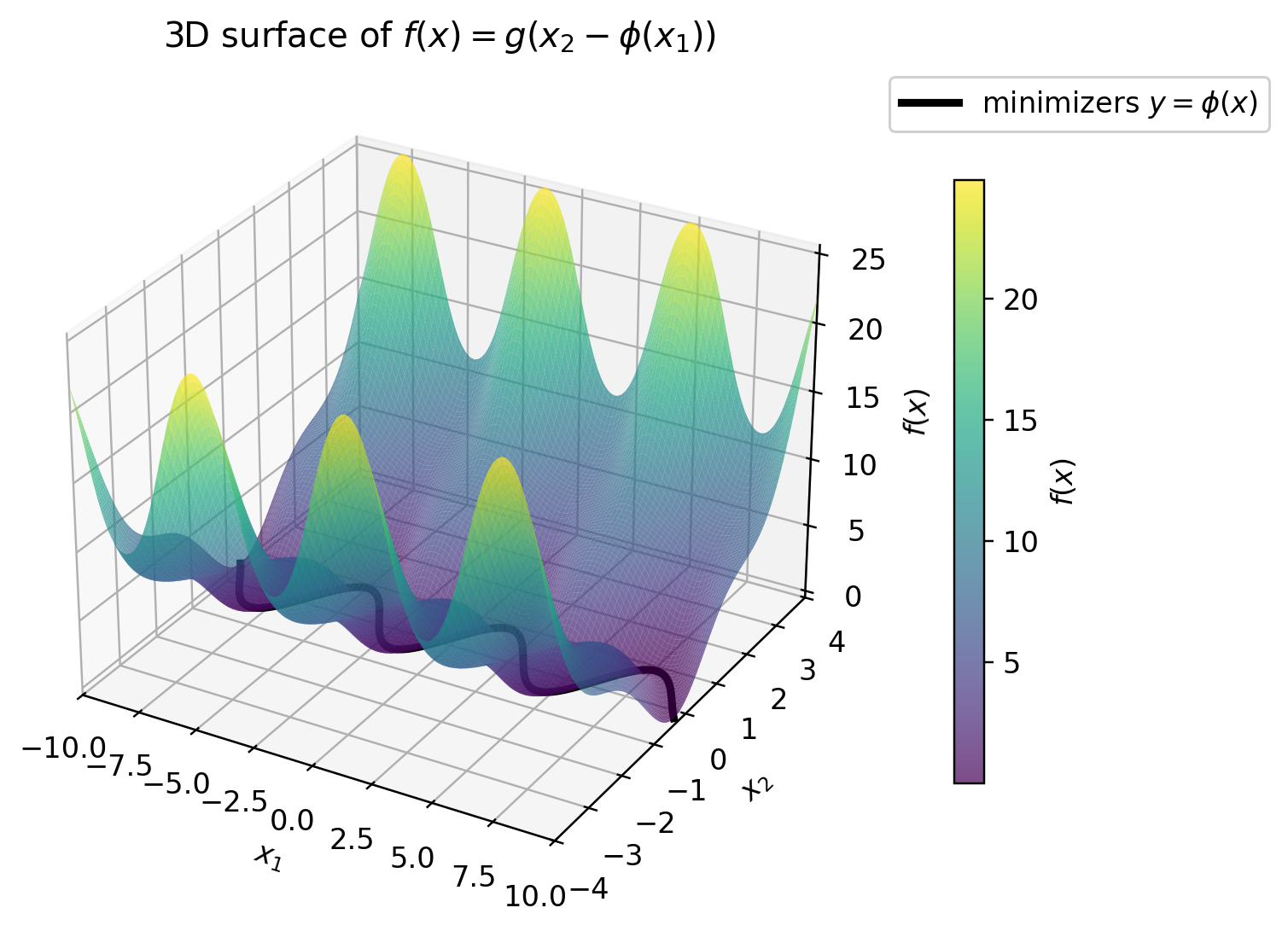}
		\caption{A three-dimensional surface plot of the objective function, illustrating the geometry of the optimization landscape. The set of global minimizers is visualized as black solid line. }  \label{fig:losslandscape}
	\end{figure} 
	
	\begin{figure}[!htb]
		\centering \includegraphics[width=0.45\textwidth]{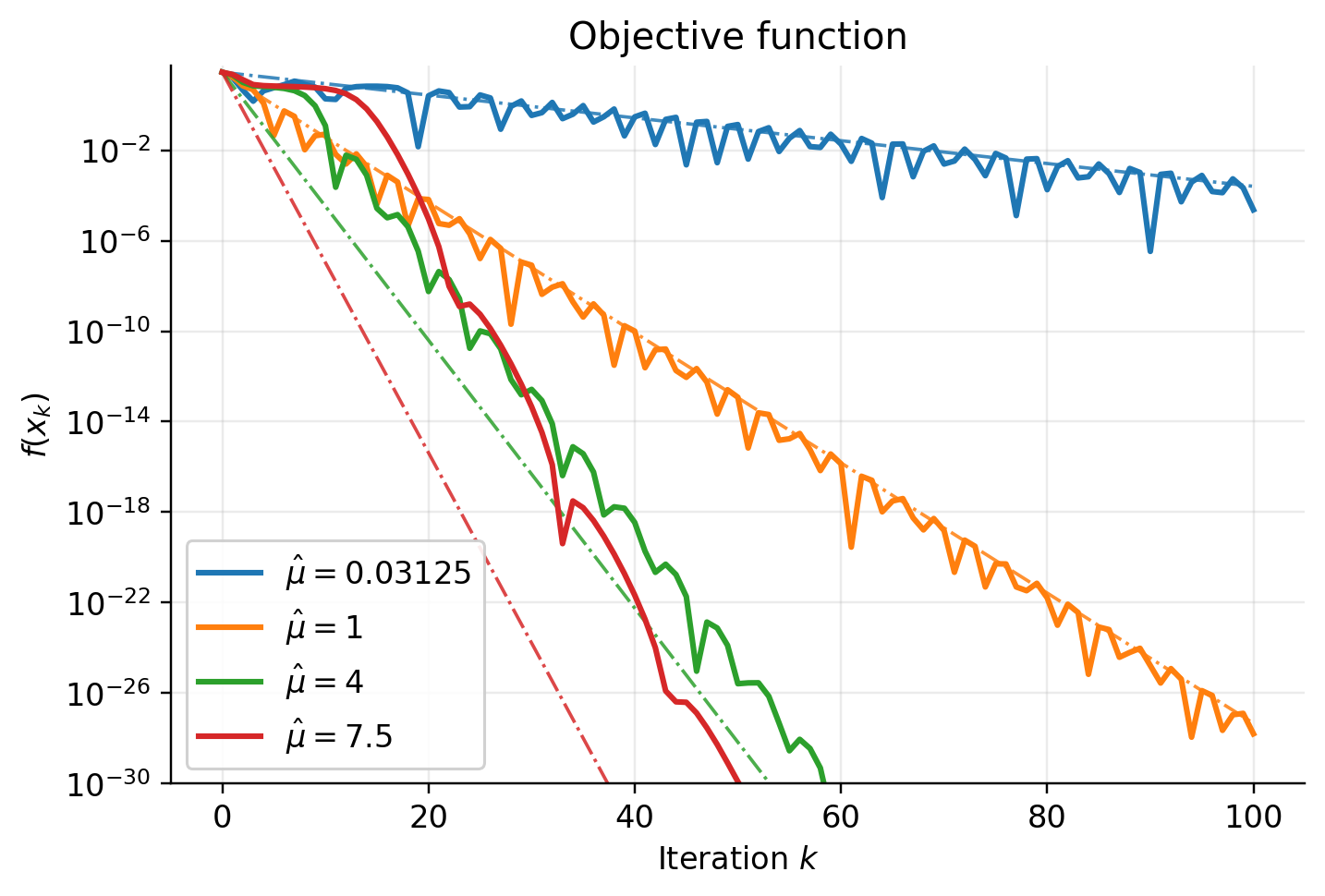}~~\includegraphics[width=0.45\textwidth]{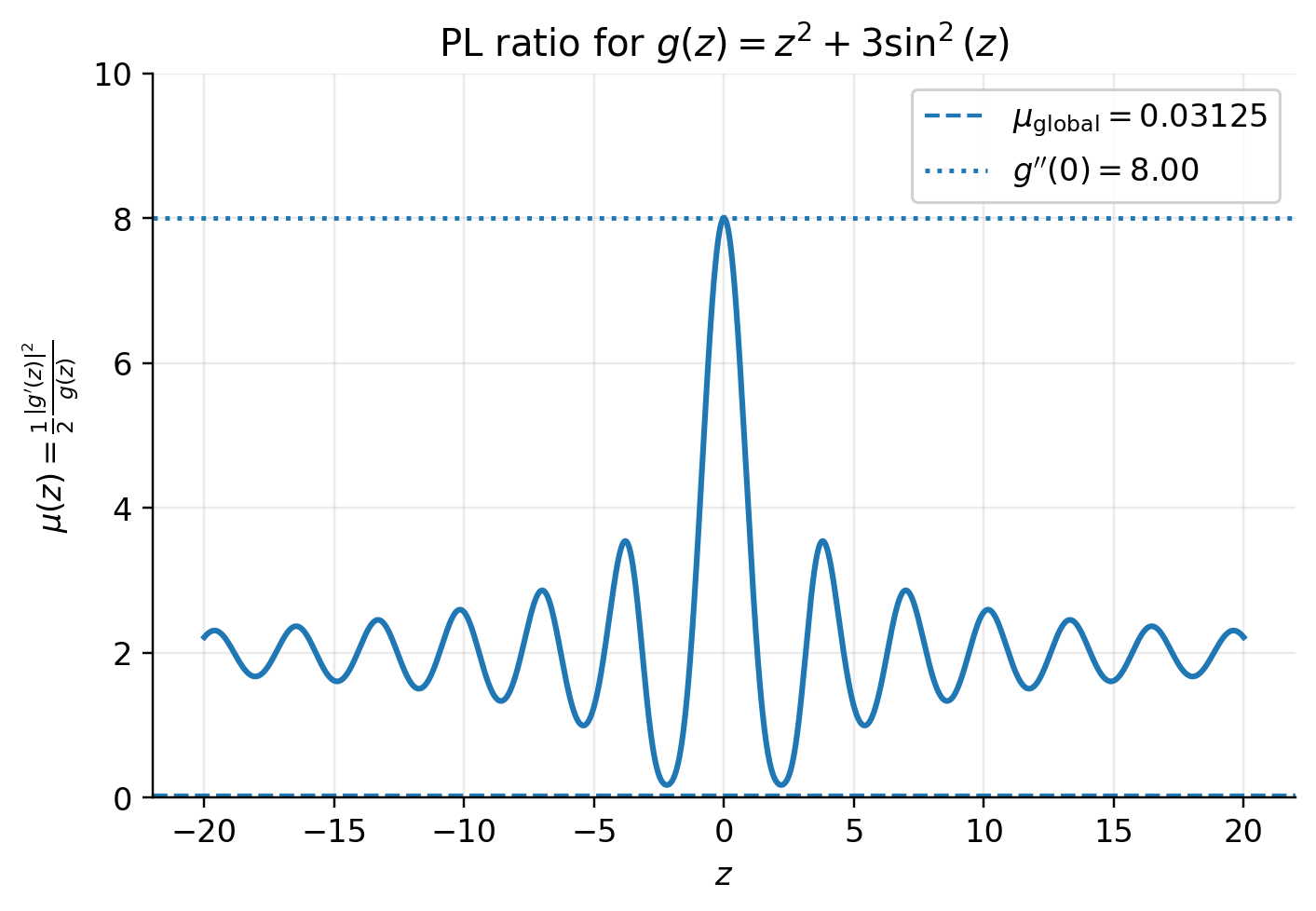}
		\caption{(Left) Evolution of the objective function values along the iterates for four different choices of estimated PL constants. Solid lines (in different colors) show the empirical losses, while the corresponding dashed lines represent the theoretical convergence rates predicted by our analysis, using the same color coding. (Right) Values of the local PL ratio $\mu(z):=\frac{1}{2}\frac{g'(z)^2}{g(z)}$ evaluated along the real line, illustrating how the effective PL constant varies with $z$.}  \label{fig:objective}
	\end{figure} 
	
	The numerical results illustrate the qualitative behavior predicted by our theoretical analysis, particularly the trade-off between asymptotic acceleration and transient behavior. The plot in Figure~\ref{fig:objective} (left) indicates that parameter choices closer to the maximal values allowed by the local PL constants results in a faster asymptotic convergence rate, consistent with the locally accelerated rate derived in Theorem~\ref{theo2}. At the same time, these more aggressive choices are associated with a longer burn-in phase before the accelerated regime becomes apparent. During this initial phase, the iterates have not yet entered the neighborhood in which the improved local constants govern the dynamics, and the observed convergence is correspondingly slower. Once this region is reached, the convergence behavior transitions to the predicted asymptotic regime.

		\bibliographystyle{alpha}
		\bibliography{Heavy_Ball_ODE_2}
		
	\end{document}